\newtheorem{theorem}[equation]{Theorem}
\newtheorem{corollary}[equation]{Corollary}
\newtheorem{lemma}[equation]{Lemma}
\theoremstyle{definition}
\newtheorem{remark}[equation]{Remark}
\def\A{\ensuremath{\mathcal{A}}}
\def\C{\ensuremath{\mathcal{C}}}
\def\FF{\ensuremath{\mathbb{F}}}
\def\Out{\operatorname{Out}}
\newcommand{\GL}{\operatorname{GL}\nolimits}
\date{\today}
\title{The cohomology of the sporadic group $J_2$ over $\FF_3$}
\author{Antonio D\'{i}az Ramos}
\address{Departamento de {\'A}lgebra, Geometr{\'\i}a y Topolog{\'\i}a,
Universidad de M{\'a}\-la\-ga, Apdo correos 59, 29080 M{\'a}laga,
Spain.}
\email{adiaz@agt.cie.uma.es}
\author{Oihana Garaialde Oca\~{n}a}
\address{Departamento de Matem\'aticas, Facultad de Ciencia y Tecnolog\'ia,
Universidad del Pa\'is Vasco, Apartado 644, 48080 Bilbao, Spain.}
\email{ogaraialde@gmail.com}
\thanks{First author is supported by MICINN grant RYC-2010-05663. This work was partially supported by FEDER-MCI grant MTM2010-18089, Junta de Andaluc{\'\i}a grant FQM-213 and P07-FQM-2863.}
\begin{document}

\begin{abstract}
We describe the cohomology ring $H^*(J_2;\FF_3)$ both as subring of $H^*(3^{1+2}_+;\FF_3)$ and with an abstract presentation. We also give its Poincar\'{e} series. We use as tool a spectral sequence for the strongly closed $3$-subgroup of $J_2$. This method might be used to compute the cohomology of any finite simple group with a strongly closed $p$-subgroup.\\
MSC2010: 55T10, 55R35, 20J06, 20D20.\\
Keywords: cohomology ring, Hall-Janko group, spectral sequence.
\end{abstract}

\maketitle


\section{Introduction}

The computation of the cohomology of the sporadic finite simple groups is an intrinsically interesting subject in which only partial answers are yet known. By far, the most complete knowledge is at the prime $2$, see for instance \cite{AM2004} and \cite{BS2008}, where the authors tackle this problem using homotopy theory methods. Even with coefficients in $\FF_2$ not all the cohomology rings of the sporadic groups are fully understood. For $p$ odd and when the Sylow $p$-subgroup is the extraspecial group $p^{1+2}_+$ of order $p^3$ and exponent $p$, the description of the full cohomology ring has been circumvented, see \cite {TY1996} and \cite{Y1998}. In general, due to the complexity of the problem, computer calculations are in some cases the only or most complete source of information, see \cite{KINGGREENELLIS} for instance.

In this work, we present a method that involves the stable elements theorem \cite{CE1956} together with the spectral sequence \cite{D2012}. The approach is classical but the novelty is that this spectral sequence applies on the weak hypothesis of the existence of a strongly closed $p$-subgroup. Hence, this procedure can be utilized to compute the cohomology ring of any finite simple group which possesses a strongly closed $p$-subgroup. There is a classification of such finite simple groups \cite{FF2009}.

For the particular case of the Hall-Janko group or second Janko group $J_2$, the ring $H^*(J_2;\FF_2)$ was studied in \cite{CMM1999} with the partial aid of computer calculations. At the prime $3$, a Sylow $3$-subgroup $S$ of $J_2$ is isomorphic to $3^{1+2}_+$ and  $H^*(J_2;\FF_3)$ has been determined by computer \cite{KING}, \cite{KINGGREEN}. Using  our alternative method we give here  a computer-free description of the cohomology ring $H^*(J_2;\FF_3)$ both as a subring of $H^*(3^{1+2}_+;\FF_3)$ and with an abstract presentation.

Recall that in the finite simple group $J_2$ of order 604,800, the center $Z(S)\cong C_3$ is a strongly closed $3$-subgroup of $S$. Moreover, as the normalizer of $S$ controls fusion \cite[Remark 1.4]{RV2004}, the spectral sequence of \cite{D2012} amounts to the following: Firstly, set $E_*$ to be the Lyndon-Hochschild-Serre spectral sequence of the central extension
$$
C_3\rightarrow 3^{1+2}_+\rightarrow C_3\times C_3,
$$
with second page $E_2^{n,m}=H^n(C_3\times C_3;H^m(C_3;\FF_3))$ and converging to $H^*(3^{1+2}_+;\FF_3)$. Secondly, the group $\Out_{J_2}(S)=C_8$ acts on each page of this spectral sequence and taking invariants gives rise to a spectral sequence $E_*^{C_8}$ which converges to $H^*(J_2;\FF_3)$. In particular, the second page is $H^n(C_3\times C_3;H^m(C_3;\FF_3))^{C_8}$. The Lyndon-Hochschild-Serre spectral sequence $E_*$ of $3^{1+2}_+$ was computed by Leary in \cite{L1993} and it collapses in $E_6=E_\infty$. Hence, the invariants $E_6^{C_8}=E_\infty^{C_8}$ is a bigraded algebra associated to some filtration of $H^*(J_2;\FF_2)$ and we use this fact to determine the ring $H^*(J_2;\FF_3)=H^*(3^{1+2}_+;\FF_3)^{C_8}$.

The layout of the paper is as follows: We start in Section \ref{sectionLearyss} by providing a detailed description of the page $E_6$.  Then in Section \ref{sectioninvariants} we compute the invariants $E_6^{C_8}$ and the Poincar\'{e} series of $H^*(J_2;\FF_3)$. With the aid of the bigraded algebra $E_6^{C_8}$ we determine in Section \ref{sectionring} the ring $H^*(J_2;\FF_3)$ as a subring of $H^*(3^{1+2}_+;\FF_3)$ and an abstract presentation of $H^*(J_2;\FF_3)$.

\textbf{Acknowledgements:} We are grateful to Universidad de M\'alaga and Universidad del Pa\'is Vasco for their hospitality and travel and accommodation support in several occasions during the development of this work.

\section{The cohomology groups of $3^{1+2}_+$.}
\label{sectionLearyss}

We denote by $S=3^{1+2}_+$ the extraspecial group of order $27$ and exponent $3$. It has the following presentation
$$
S= \langle A,B,C | A^3=B^3=C^3=[A,C]=[B,C]=1\textit{, }[A,B]=C\rangle.
$$
The center of $S$ is $Z(S)=\langle C\rangle\cong C_3$ and hence we have the following central extension:
\begin{equation}\label{centralextension31+2+}
C_3 \rightarrow 3_+^{1+2} \stackrel{\pi}\rightarrow C_3\times C_3.
\end{equation}
Leary describes in \cite{L1993} the LHSss. $E_*$ of this extension. Its second page is given by
$$
E^{*,*}_2=H^*(C_3;\FF_3)\otimes H^*(C_3\times C_3;\FF_3)=\Lambda(u)\otimes \FF_3[t]\otimes \Lambda(y_1,y
_2)\otimes \FF_3[x_1,x_2],
$$
with the following degrees for the generators
$$
\deg(u)=\deg(y_1)=\deg(y_2)=1\textit{, }\deg(t)=\deg(x_1)=\deg(x_2)=2
$$ 
and with Bockstein operations $\beta(u)=t$ and $\beta(y_1)=x_1$, $\beta(y_2)=x_2$. The extension (\ref{centralextension31+2+}) is classified by $y_1y_2\in H^2(C_3\times C_3;\FF_3)$ and, according to \cite{L1993}, the differentials in $E_*$ are the following:
\begin{enumerate}[(i)]
\item $d_2(u)=y_1y_2$, $d_2(t)=0$,
\item $d_3(t)=x_1y_2-x_2y_1$,
\item $d_4(t^iu(x_1y_2-x_2y_1))=it^{i-1}(x_1x_2^2y_2-x_1^2x_2y_1)$, $d_4(t^2y_i)=u(x_1y_2-x_2y_1)x_i$,\item $d_5(t^2(x_1y_2-x_2y_1))=x_1^3x_2-x_1x_2^3$,  $d_5(ut^2y_1y_2)= k u(x_1^3y_2-x_2^3y_1)$, $k \ne 0$.
\end{enumerate}

A long and intricate computation leads from $E_2$ to $E_6$. We give an explicit description of $E_6$ but we omit most of the calculations.

\begin{lemma} \label{E6corner}With the notations above, the following table gives representatives of classes that form an $\FF_3$-basis of $E_6^{n,m}$ for $0\leq n\leq 6$ and $0\leq m\leq 5$:  
{\small
$$
\xymatrix@=0pt{
&&&&&&&\\
5& & & & & & &\\
4& & & & & & &\\
3& & &uty_1y_2 &  & & &\\
2& &ty_1, ty_2 & &ty_1x_1, ty_1x_2&  &tx_1^2y_1, tx_1^2y_2&\\
& && &ty_2x_2&  &tx_2^2y_1, tx_2^2y_2&\\
1& &uy_1, uy_2 &uy_1y_2 &uy_1x_1, uy_1x_2 & &ux_1^2y_1, ux_1^2y_2&\\
& && &uy_2x_1, uy_2x_2  & &ux_2^2y_1, ux_2^2y_2 &\\
0&1&y_1,y_2&x_1,x_2 &y_1x_1, y_1x_2&x_1^2, x_2^2&x_1^2y_1, x_1^2y_2 &x_1^3, x_2^3\\
&&&  & y_2x_2 &x_1x_2 &x_2^2y_1, x_2^2y_2 &x_1^2x_2, x_1x_2^2 \\
&0 & 1 & 2 & 3 & 4 & 5 & 6 
\ar@{-}"1,2"+<-5pt,-2pt>;"10,2"+<-5pt,-5pt>;
\ar@{-}"1,2"+<5pt,-2pt>;"10,2"+<5pt,-5pt>;
\ar@{-}"1,3"+<17pt,-2pt>;"10,3"+<17pt,-5pt>;
\ar@{-}"1,4"+<15pt,-2pt>;"10,4"+<15pt,-5pt>;
\ar@{-}"1,5"+<28pt,-2pt>;"10,5"+<28pt,-5pt>;
\ar@{-}"1,6"+<15pt,-2pt>;"10,6"+<15pt,-5pt>;
\ar@{-}"1,7"+<27pt,-2pt>;"10,7"+<27pt,-5pt>;
\ar@{-}"1,8"+<25pt,-2pt>;"10,8"+<25pt,-5pt>;
\ar@{-}"1,2"+<-5pt,-2pt>;"1,8"+<25pt,-2pt>;
\ar@{-}"2,2"+<-5pt,-5pt>;"2,8"+<25pt,-5pt>;
\ar@{-}"3,2"+<-5pt,-5pt>;"3,8"+<25pt,-5pt>;
\ar@{-}"4,2"+<-5pt,-5pt>;"4,8"+<25pt,-5pt>;
\ar@{-}"6,2"+<-5pt,-5pt>;"6,8"+<25pt,-5pt>;
\ar@{-}"8,2"+<-5pt,-5pt>;"8,8"+<25pt,-5pt>;
\ar@{-}"10,2"+<-5pt,-5pt>;"10,8"+<25pt,-5pt>;
}
$$
}
\end{lemma}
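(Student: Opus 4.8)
The plan is to run the Lyndon--Hochschild--Serre spectral sequence explicitly: compute $E_3=H(E_2,d_2)$, then $E_4=H(E_3,d_3)$, then $E_5=H(E_4,d_4)$, and finally $E_6=H(E_5,d_5)$, using the differentials (i)--(iv) above, and then record the part with $0\le n\le6$, $0\le m\le5$. The organizing observation is that $x_1,x_2,y_1,y_2$ are inflated from the base $C_3\times C_3$, so every $d_r$ kills them; hence each page is a bigraded module over $\FF_3[x_1,x_2]\otimes\Lambda(y_1,y_2)$ and each $d_r$ is a derivation which is linear over this subalgebra, so it is enough to know $d_r$ on the few remaining algebra generators (those involving $u$ and $t$), where moreover bidegree reasons frequently force vanishing.

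For $E_3$: here $d_2$ is the $\FF_3[t,x_1,x_2]$-linear derivation with $d_2(u)=y_1y_2$, so $E_3=H\bigl(\Lambda(u)\otimes\Lambda(y_1,y_2),d_2\bigr)\otimes_{\FF_3}\FF_3[t,x_1,x_2]$; the small factor is six-dimensional with basis $1,y_1,y_2,uy_1,uy_2,uy_1y_2$ (so $y_1y_2=0$, $y_i\cdot uy_i=0$, $y_1\cdot uy_2=uy_1y_2=-y_2\cdot uy_1$, $uy_i\cdot uy_j=0$), and $E_3$ is free over $\FF_3[t,x_1,x_2]$ on these six classes. For $E_4$: write $E_3=\bigoplus_{k\ge0}t^kM$, with $M$ the $\FF_3[x_1,x_2]$-span of those six classes, and let $Z\subseteq M$ be the submodule killed by multiplication by $\theta:=d_3(t)=x_1y_2-x_2y_1$; then $d_3(t^k\mu)=k\,t^{k-1}\theta\mu$, which vanishes whenever $3\mid k$ because $\mathrm{char}\,\FF_3=3$. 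Taking homology of the complex $\cdots\to t^2M\to tM\to M\to0$ thus gives
$$E_4\;\cong\;\FF_3[t^3]\otimes\bigl(M/\theta M\ \oplus\ t\cdot(Z/\theta M)\ \oplus\ t^2\cdot Z\bigr),$$
and a short computation over $\FF_3[x_1,x_2]$, starting from $\theta\cdot1=x_1y_2-x_2y_1$, $\theta\cdot uy_1=-x_1uy_1y_2$, $\theta\cdot uy_2=-x_2uy_1y_2$, $\theta\cdot y_1=\theta\cdot y_2=\theta\cdot uy_1y_2=0$, pins down $Z$ and $\theta M$; the only relation entering is the Koszul relation $x_1y_2=x_2y_1$ and its multiples. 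For $E_5$ and $E_6$ one iterates: transport $d_4$ through $E_4$ by the Leibniz rule from (iii) (note $u\theta=x_1uy_2-x_2uy_1$, so (iii) is precisely $d_4$ on $t^iu\theta$, together with $d_4(t^2y_i)=x_i\,u\theta$), take homology, and then do the same with $d_5$ from (iv); the undetermined constant $k$ in (iv) is irrelevant for which classes survive. Both $d_4$ and $d_5$ genuinely act inside the stated rectangle --- for instance $d_4(t^2y_i)=x_iu\theta$ trims the cell $(5,1)$ from dimension $6$ to $4$, and it is $d_4$ together with $d_5$ that empties the rows $m=4,5$ there --- so all four differentials are needed. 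Discarding everything outside the rectangle and choosing the monomials displayed as coset representatives (they span the subquotients modulo $x_1y_2=x_2y_1$ and its consequences) produces the table.

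The crux is the $d_4$ step, and then $d_5$. Unlike $d_2$ and $d_3$, these are specified only on a handful of classes, and the intermediate page $E_4$ is genuinely awkward: multiplication by $\theta$ on $M$ has cokernel involving the maximal ideal $(x_1,x_2)$, so $E_4$ has non-free $\FF_3[x_1,x_2]$-summands, and several of its algebra generators --- such as $t^3$, $ty_i$, $t^2y_i$, $t(x_2uy_1-x_1uy_2)$, $t^2(x_2uy_1-x_1uy_2)$ --- are not products of already-understood classes, so $d_4$ must be known on each of them separately. Here one uses that (i)--(iv) is the \emph{complete} list of nonzero differentials, in the sense that every nonzero $d_r$ is a Leibniz consequence of those listed: concretely $d_4(t^3)=0$, $d_4$ vanishes on every algebra generator of $E_4$ not appearing in (iii), and on the remaining ones it is read off from (iii). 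The same freedom in the choice of cocycle representatives at this stage is exactly what produces the ambiguous constant $k$ in (iv). Throughout, the built-in consistency check is that $\sum_{n+m=d}\dim_{\FF_3}E_6^{n,m}=\dim_{\FF_3}H^d(3^{1+2}_+;\FF_3)$ for $d\le6$ (since $E_6=E_\infty$), which matches the Poincar\'e series computed by Leary in \cite{L1993}.
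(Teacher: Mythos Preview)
Your approach is correct and is precisely the computation the paper alludes to but does not carry out: the paper says only that ``a long and intricate computation leads from $E_2$ to $E_6$'' and that it ``omit[s] most of the calculations,'' so your outline is in fact more detailed than what the paper provides. Your structural description of $E_3$ and of $E_4\cong\FF_3[t^3]\otimes\bigl((M/\theta M)\oplus t\,(Z/\theta M)\oplus t^2 Z\bigr)$ is right (note $\theta^2=0$ already in $\Lambda(y_1,y_2)\otimes\FF_3[x_1,x_2]$, so $\theta M\subseteq Z$ is automatic), and your sample checks at $(5,1)$ and in rows $m=4,5$ are accurate. Two small remarks: the sign in $\theta\cdot uy_1=\pm x_1\,uy_1y_2$ depends on the sign convention and does not affect the outcome; and your assertion that ``(i)--(iv) is the complete list of nonzero differentials'' is exactly what Leary establishes in \cite{L1993}, so it is legitimate to invoke it here rather than rederive it.
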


It turns out that this description of the corner of $E_6$ determines the rest of $E_6$ as there are both vertical and horizontal periodicities. More precisely, we show below that $E_6^{n,m}\cong E_6^{n,m+6}$ for $n,m\geq 0$ and that $E_6^{n,m}\cong E_6^{n+2,m}$ for $n\geq 5$ and $m\geq 0$. To recognize these isomorphisms we need nevertheless to fully understand the page $E_6$. The following result gives an explicit description of $E_6^{n,m}$ as a subquotient of $E_2^{n,m}$. 

\begin{lemma}\label{E6subquotient}
Assume the notations above and let $n\geq 0$ and $m\geq 0$. Set
$$
z=\begin{cases} t^{3\lfloor m/6\rfloor}, & m=0\mod 6\\ ut^{3\lfloor m/6\rfloor}, & m=1\mod 6\\
t^{3\lfloor m/6\rfloor+1},&m=2\mod 6\\
ut^{3\lfloor m/6\rfloor+1}, &m=3\mod 6.
\end{cases}
$$
Then we have:
\begin{enumerate}[(i)]
\item For $n=0$, $E_6^{n,m}=\langle z\rangle$ for $m=0\mod 6$ and $E_6^{n,m}=0$ otherwise.
\item For $n=1$, $E_6^{n,m}=\langle zy_1,zy_2\rangle$ for $m=0,1,2\mod 6$ and $E_6^{n,m}=0$ otherwise.
\item For $n=2$, $E_6^{n,m}=\langle zx_1,zx_2\rangle$ for $m=0\mod 6$, $E_6^{n,m}=\langle zy_1y_2\rangle$ for $m=1,3\mod 6$ and $E_6^{n,m}=0$ otherwise.
\item For $n=3$, $E_6^{n,m}=E_2^{n,m}$ for $m=1\mod 6$,  $E_6^{n,m}$ is the following quotient:
$$
\langle zx_1y_1,zx_1y_2,zx_2y_1,zx_2y_2\rangle
/\langle z(x_1y_2-x_2y_1)\rangle
$$
for $m=0,2\mod 6$ and $E_6^{n,m}=0$ otherwise.
\item For $n\geq 4$ and $n=2q$, $E_6^{n,m}$ is the following quotient if $m=0\mod 6$:
$$
\langle zx_1^ix_2^{q-i}\textit{, $0\leq i\leq q$}\rangle
/\langle zx_1^ix_2^{q-i}-zx_1^{i+2}x_2^{q-i-2}\textit{, $1\leq i\leq q-3$}\rangle,
$$
and $E_6^{n,m}=0$ otherwise.
\item For $n\geq 4$ and $n=2q+1$, if $m=0,1,2\mod 6$, then $E_6^{n,m}$ is the quotient of 
$$
\langle zx_1^ix_2^{q-i}y_1,zx_1^ix_2^{q-i}y_2\textit{, $0\leq i\leq q$}\rangle
$$ 
by 
{\small
$$
\langle zx_1^ix_2^{q-i}y_1-zx_1^{i+1}x_2^{q-i-1}y_2,
zx_1^jx_2^{q-j}y_2-zx_1^{j+1}x_2^{q-j-1}y_1
\textit{, $0\leq i\leq q-1$, $1\leq j\leq q-2$}\rangle,
$$
}
and $E_6^{n,m}=0$ otherwise.
\end{enumerate}
\end{lemma}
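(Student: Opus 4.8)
The plan is to compute $E_6$ page by page out of $E_2$, tracking each $E_r$ as a module over the polynomial subalgebra $R=\FF_3[t^3,x_1,x_2]\subset E_2$. Since $d_2(t^3)=0$, $d_3(t^3)=3t^2d_3(t)=0$ in characteristic $3$, $d_r(x_i)=0$ for all $r$, and each $d_r$ is a derivation, every $d_r$ is $R$-linear and each $E_r$ is a finitely generated bigraded $R$-module (for $r\ge4$ one also needs that $t^3$ is still a cycle, which one sees while computing the previous page). First I would compute $E_3$ from the derivation $d_2$ determined by $d_2(u)=y_1y_2$ and $d_2=0$ on $t,x_i,y_i$: an element $a+ub$ is a $d_2$-cycle iff $y_1y_2b=0$, i.e.\ $b\in(y_1,y_2)$, and the $d_2$-boundaries form $\FF_3[t,x_1,x_2]\{y_1y_2\}$, whence
$$
E_3=\FF_3[t,x_1,x_2]\otimes\langle\, 1,\ y_1,\ y_2,\ uy_1,\ uy_2,\ uy_1y_2\,\rangle ,
$$
a free $R$-module. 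Here one already sees the origin of the parameter $z$: the fibre factor $H^*(C_3;\FF_3)=\FF_3[t]\otimes\Lambda(u)$ has $\FF_3[t^3]$-basis $\{1,u,t,ut,t^2,ut^2\}$, and the first four of these are exactly the admissible values of $z$; the assertion that the rows $m\equiv4,5\pmod6$ of $E_6$ vanish will amount to saying that the $t^2$- and $ut^2$-strata get annihilated by the later differentials — plausible since every higher differential in Leary's list involves $t$, $t^2$ or $ut^2$.

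Next I would feed in the remaining differentials. On $E_3$ the differential $d_3$ is the $R$-linear derivation with $d_3(t)=x_1y_2-x_2y_1$, vanishing on the six module generators for dimension reasons (e.g.\ $E_3^{4,-1}=E_3^{5,-1}=0$); computing $E_4=H(E_3,d_3)$ is then a purely $\FF_3[x_1,x_2]$-linear matter in each of the three $t$-strata $\FF_3[t]=\FF_3[t^3]\oplus t\FF_3[t^3]\oplus t^2\FF_3[t^3]$, and it is here that the relation $x_1y_2=x_2y_1$ (the class $d_3(t)$) and the vanishing of several $t$- and $t^2$-strata first appear. One then reads $d_4$ and $d_5$ off Leary's list and computes $E_5=H(E_4,d_4)$ and $E_6=H(E_5,d_5)$ the same way; $d_5$ introduces the relation $x_1^3x_2=x_1x_2^3$, and together with $d_4$ it forces the ``interior collapse'' $x_1^ix_2^{q-i}=x_1^{i+2}x_2^{q-i-2}$ that cuts every horizontal slice down to dimension at most $4$. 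Throughout, one must check that the representatives displayed in the statement ($z$, $zy_i$, $zx_i$, $zy_1y_2$, $zx_iy_j$, $zx_1^ix_2^{q-i}$, $zx_1^ix_2^{q-i}y_j$) are genuine permanent cycles in $E_2^{n,m}$ and that the listed relations generate all relations among their classes in $E_6$; combined with the corner computation of Lemma~\ref{E6corner}, this determines $E_6^{n,m}$ in every bidegree.

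The main obstacle is exactly this sustained, case-heavy homology computation over $\FF_3[x_1,x_2]$ across four pages, and in particular the bookkeeping of how the polynomial module structure degenerates: one needs the precise kernels and cokernels of multiplication by $x_1y_2-x_2y_1$ and by $x_1^3x_2-x_1x_2^3$ on the relevant free modules, as these are what produce the ``boundary'' description of each slice by the four monomials $zx_1^q$, $zx_1^{q-1}x_2$, $zx_1x_2^{q-1}$, $zx_2^q$ (resp.\ their $y_j$-multiples) modulo the stated relations. Once the $R$-module structure of $E_6$ has been pinned down in this way, the periodicities announced after Lemma~\ref{E6corner} drop out of the explicit formulas: multiplication by the permanent cycle $t^3$ gives $E_6^{n,m}\xrightarrow{\ \sim\ }E_6^{n,m+6}$, and the stabilized four-dimensional form of the horizontal slices for $n\ge5$ yields $E_6^{n,m}\cong E_6^{n+2,m}$.
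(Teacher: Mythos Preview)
Your outline is correct and is essentially the only way to proceed: run Leary's differentials $d_2,d_3,d_4,d_5$ page by page, organizing everything as a module over the permanent-cycle subring $\FF_3[t^3,x_1,x_2]$, and read off the resulting subquotients. The paper itself does not prove this lemma at all; it simply states that ``a long and intricate computation leads from $E_2$ to $E_6$'' and records the outcome. Your sketch already contains more detail than the paper (e.g.\ the explicit computation of $E_3$ as $\FF_3[t,x_1,x_2]\{1,y_1,y_2,uy_1,uy_2,uy_1y_2\}$, and the identification of which differentials kill the $t^2$- and $ut^2$-strata), and your honest flag that the remaining work is a sustained case analysis over $\FF_3[x_1,x_2]$ is exactly right.
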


As a direct consequence of this lemma we obtain representatives for a basis of $E_6^{n,m}$ and any $n,m\geq 0$.

\begin{corollary}\label{E6representatives}
Assume the notations above and define $z$ as in Lemma \ref{E6subquotient}. For each $n\geq 0$ and $m\geq 0$ we denote by $B\leq E_2^{n,m}$ a set of elements such that their classes survive to $E_6^{n,m}$ and form a basis of $E_6^{n,m}$. Then we have:

\begin{enumerate}[(i)]
\item For $n=0$, $B=\{z\}$ for $m=0\mod 6$ and $B=\emptyset$ otherwise.
\item For $n=1$, $B=\{ zy_1,zy_2\}$ for $m=0,1,2\mod 6$ and $B=\emptyset$ otherwise.
\item For $n=2$, $B=\{zx_1,zx_2\}$ for $m=0\mod 6$, $B=\{zy_1y_2\}$ for $m=1,3\mod 6$ and $B=\emptyset$ otherwise.
\item For $n=3$, $B=\{zx_1y_1,zx_1y_2,zx_2y_1,zx_2y_2\}$ for $m=1\mod 6$, $B$ equals $\{zx_1y_1,zx_1y_2,zx_2y_2\}$ for $m=0,2\mod 6$ and $B=\emptyset$ otherwise.
\item For $n\geq 4$ and $n=2q$, $B=\{zx_1^q,zx_1^{q-1}x_2,zx_1^{q-2}x_2^2,zx_2^q\}$  if $m=0\mod 6$ and $B=\emptyset$ otherwise.
\item For $n\geq 4$ and $n=2q+1$, $B=\{zx_1^qy_1,zx_1^qy_2,zx_1^{q-1}x_2y_2,zx_2^qy_2\}$ if $m=0,1,2\mod 6$ and $B=\emptyset$ otherwise.
\end{enumerate}
\end{corollary}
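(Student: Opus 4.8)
The plan is to derive the corollary from Lemma~\ref{E6subquotient} by going through its six cases and, in each, exhibiting the asserted set $B$ as a transversal for the relations that present $E_6^{n,m}$ as a subquotient of $E_2^{n,m}$.

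First I would dispose of the cases in which Lemma~\ref{E6subquotient} presents $E_6^{n,m}$ with no relations: parts (i), (ii), (iii), and the subcase $m\equiv 1\pmod{6}$ of part (iv). In each of these the monomials listed in the lemma already constitute an $\FF_3$-basis of $E_6^{n,m}$, so one only has to observe that the set $B$ of the corollary coincides with that list. For part (iv) with $m\equiv 1\pmod{6}$ one records in addition that $\dim_{\FF_3}E_2^{3,m}=\dim_{\FF_3}H^3(C_3\times C_3;\FF_3)=4$, so $E_6^{3,m}=E_2^{3,m}$ has the four displayed monomials as a basis.

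The remaining cases --- part (iv) with $m\equiv 0,2\pmod{6}$, part (v), and part (vi) --- all have the same shape: $E_6^{n,m}$ is the quotient of a subspace $V\subseteq E_2^{n,m}$, of which the listed monomials form a basis, by a family of relations each of which is a difference of two of those monomials. Such a quotient has an $\FF_3$-basis obtained by choosing one monomial from each connected component of the graph on the monomials whose edges are the relations; moreover this graph is a disjoint union of paths (each monomial occurs in at most two relations, and following a chain of relations strictly changes the exponents, so there are no cycles), so the number of components equals (number of listed monomials) $-$ (number of relations). Thus in each of these three cases I would (a) describe the partition of the monomials into components explicitly, and (b) check that $B$ hits each component exactly once. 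In part (iv) the only relation $z(x_1y_2-x_2y_1)$ merges $zx_1y_2$ with $zx_2y_1$ and leaves $zx_1y_1$, $zx_2y_2$ isolated, so $B=\{zx_1y_1,zx_1y_2,zx_2y_2\}$ is a transversal. In part (v), with $n=2q$, the relations $zx_1^ix_2^{q-i}=zx_1^{i+2}x_2^{q-i-2}$ for $1\le i\le q-3$ break the index set $\{0,1,\dots,q\}$ into the singletons $\{0\}$ and $\{q\}$ together with two parity chains inside $\{1,\dots,q-1\}$, and the monomials $zx_1^q$, $zx_2^q$, $zx_1^{q-1}x_2$, $zx_1^{q-2}x_2^2$ (the last two having indices of opposite parity) meet every component. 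In part (vi), with $n=2q+1$, writing $a_i=zx_1^ix_2^{q-i}y_1$ and $b_i=zx_1^ix_2^{q-i}y_2$, the relations $a_i=b_{i+1}$ ($0\le i\le q-1$) and $b_j=a_{j+1}$ ($1\le j\le q-2$) link the monomials $a_0,b_1,a_2,b_3,\dots$ into one component and $a_1,b_2,a_3,b_4,\dots$ into another, leaving $a_q$ and $b_0$ isolated, so that $B=\{a_q,b_q,b_{q-1},b_0\}$, i.e.\ $\{zx_1^qy_1,zx_1^qy_2,zx_1^{q-1}x_2y_2,zx_2^qy_2\}$, is a transversal.

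The only real care is needed in the index bookkeeping of parts (v) and (vi): one must pin down the endpoints of the parity chains and check the small values ($q=2$ in (v), and $q=2,3$ in (vi)), where some of the displayed monomials coincide, so that one has to confirm that the number of \emph{distinct} elements of $B$ still equals $\dim_{\FF_3}E_6^{n,m}$. Once the component structure is in hand the argument is immediate: $B$ spans $E_6^{n,m}$ because every generator is identified there with an element of $B$, and $B$ is independent because it injects into the set of components. I expect this matching of the combinatorics of the monomial exponents with the chain structure of the relations to be the main, though elementary, obstacle; everything else is a direct transcription of Lemma~\ref{E6subquotient}.
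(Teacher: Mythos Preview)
Your proposal is correct and is precisely the argument the paper has in mind: the authors state the corollary as ``a direct consequence'' of Lemma~\ref{E6subquotient} and give no further proof, so your case-by-case extraction of a transversal from the monomial/relation description is exactly the intended (and only natural) route. Your component analysis for parts (v) and (vi) is accurate; the one minor overcaution is that in part (vi) the four listed monomials are already distinct for every $q\ge 2$, so no coincidence check is needed there (it is only in part (v) at $q=2$ that $zx_1^{q-2}x_2^2=zx_2^q$ collapses $B$ to three elements, matching $\dim E_6^{4,m}=3$).
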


\begin{lemma} \label{equverticaliso} With the notations above, the element $t^3$ survives to $E_6$. Moreover, for each $n\geq 0$ and $m\geq 0$, multiplication by $t^3$ is an isomorphism $\Psi^{n,m}\colon E_6^{n,m}\to E_6^{n,m+6}$ of vector spaces over $\FF_3$.
\end{lemma}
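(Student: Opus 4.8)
The plan is to extract both statements from the explicit description of $E_6$ provided in Lemma~\ref{E6subquotient} and Corollary~\ref{E6representatives}. That $t^3$ survives to $E_6$ is then immediate: taking $m=6$ in Corollary~\ref{E6representatives}(i) shows that $z=t^3$ represents a basis element of $E_6^{0,6}$, so in particular $t^3$ is a cycle for $d_2,d_3,d_4,d_5$ and is not an iterated boundary. (As a partial direct check, $d_2(t^3)=d_3(t^3)=3t^2\,d_3(t)=0$ by the Leibniz rule, since we work over $\FF_3$; the vanishing of $d_4$ and $d_5$ on $t^3$ is part of the computation behind Lemma~\ref{E6corner}.)

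Since $t^3$ is a cycle for $d_2,\dots,d_5$, its class in $E_6^{0,6}$ is defined, and multiplication by it is compatible with the description of $E_6^{n,m}$ as a subquotient of the ring $E_2^{n,m}$: multiplying a cycle representative by $t^3$ again yields a cycle representative, and multiplying a boundary representative by $t^3$ yields a boundary representative. Hence multiplication by $t^3$ descends to a well-defined $\FF_3$-linear map $\Psi^{n,m}\colon E_6^{n,m}\to E_6^{n,m+6}$, which on representatives in $E_2$ is simply $x\mapsto t^3x$.

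It remains to see that $\Psi^{n,m}$ is bijective, and for this I would check that it carries the basis of $E_6^{n,m}$ produced by Corollary~\ref{E6representatives} onto the corresponding basis of $E_6^{n,m+6}$. The key observation is that $(m+6)\bmod 6=m\bmod 6$ while $\lfloor(m+6)/6\rfloor=\lfloor m/6\rfloor+1$, so that the element $z$ of Lemma~\ref{E6subquotient} attached to $m+6$ equals $t^3$ times the one attached to $m$; one verifies this in the four cases $m\equiv 0,1,2,3\bmod 6$, and when $m\equiv 4,5\bmod 6$ both $E_6^{n,m}$ and $E_6^{n,m+6}$ vanish, so there is nothing to prove. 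Since the basis of $E_6^{n,m}$ in Corollary~\ref{E6representatives} consists of the classes of the elements $z\mu$ with $\mu$ ranging over a set of monomials in $x_1,x_2,y_1,y_2$ depending only on $n$ and $m\bmod 6$, multiplication by $t^3$ sends each such $z\mu$ to $(t^3z)\mu$, which is again one of the listed representatives for $E_6^{n,m+6}$ (the monomial set being unchanged). Thus $\Psi^{n,m}$ takes a basis to a basis and is an isomorphism. The only genuinely non-formal ingredient here is the compatibility of multiplication by the cycle $t^3$ with the subquotient $E_6$ of $E_2$; everything else is bookkeeping with the formulas of Lemma~\ref{E6subquotient}, the point to get right being that the monomial factors in those formulas depend on $m$ only through $m\bmod 6$ while the $z$-factor picks up exactly one factor of $t^3$ under $m\mapsto m+6$.
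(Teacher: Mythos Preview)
Your proof is correct and follows essentially the same route as the paper: survival of $t^3$ is read off from Corollary~\ref{E6representatives}(i), the Leibniz rule with $d_r(t^3)=0$ gives well-definedness of multiplication by $t^3$ on each page, and bijectivity on $E_6$ is checked by observing that the explicit bases of Corollary~\ref{E6representatives} for $E_6^{n,m}$ and $E_6^{n,m+6}$ differ only by the factor $t^3$ in the $z$-part. The paper's proof is terser and mentions the Evens norm map as an alternative reason $t^3$ is a permanent cycle, but the argument is the same.
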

\begin{proof}
By Corollay \ref{E6representatives} or by an Evens' norm map argument \cite{E1991}, the element $t^3$ is a permanent cycle and hence it survives to $E_\infty=E_6$. In particular, multiplication by $t^3$ commutes with all the differentials and induces a linear map $\Psi^{n,m}_i\colon E^{n,m}_i\to E^{n,m+6}_i$ for all $n\geq 0$, $m\geq 0$ and $i\geq 2$.  From Corollary \ref{E6representatives} we deduce that $\Psi_6^{n,m}$ is an isomorphism.
\end{proof}

Now we can state the horizontal periodicity.

\begin{lemma} \label{equhorizontaliso} With the notations above, for each $n\geq 5$ and $m\geq 0$, the map $\Phi^{n,m}\colon E_6^{n,m}\to E_6^{n+2,m}$ given by 
$$
zx_1^q\mapsto zx_1^{q+1},zx_1^{q-1}x_2\mapsto zx_1^qx_2 ,zx_1^{q-2}x_2^2\mapsto zx_1^{q-1}x_2^2,zx_2^q\mapsto zx_2^{q+1}
$$
for $n=2q$ and $m=0\mod6$ and by
$$
zx_1^qy_1\mapsto zx_1^{q+1}y_1 ,zx_1^qy_2\mapsto zx_1^{q+1}y_2,zx_1^{q-1}x_2y_2\mapsto zx_1^qx_2y_2,zx_2^qy_2\mapsto zx_2^{q+1}y_2
$$
for $n=2q+1$ and $m=0,1,2\mod 6$ is an isomorphism of vector spaces over $\FF_3$.
\end{lemma}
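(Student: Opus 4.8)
The plan is to verify directly, using the explicit bases from Corollary~\ref{E6representatives}, that the stated assignment $\Phi^{n,m}$ is well-defined, bijective, and compatible with the spectral sequence structure. The key point is that for $n\geq 4$ the basis given in Corollary~\ref{E6representatives} has the \emph{same cardinality} ($4$, namely $\{zx_1^q,zx_1^{q-1}x_2,zx_1^{q-2}x_2^2,zx_2^q\}$ in even degree and $\{zx_1^qy_1,zx_1^qy_2,zx_1^{q-1}x_2y_2,zx_2^qy_2\}$ in odd degree, when $m$ lies in the appropriate residue class mod $6$, and is empty otherwise) independently of $n$; so $\dim_{\FF_3}E_6^{n,m}=\dim_{\FF_3}E_6^{n+2,m}$ for $n\geq 5$, and it suffices to exhibit an explicit linear iso on basis elements. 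The formulas in the statement do exactly that: in both the even and the odd case they send the displayed basis of $E_6^{n,m}$ bijectively onto the displayed basis of $E_6^{n+2,m}$, increasing the $x_1$-exponent by one in each monomial (leaving the $x_2$-power, and the presence of $y_1,y_2$, untouched). Thus $\Phi^{n,m}$ is a well-defined $\FF_3$-linear isomorphism; this is really multiplication by $x_1$, read through the chosen bases.

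First I would set up the bookkeeping: fix $n\geq 5$, write $n=2q$ or $n=2q+1$ with $q\geq 2$ (resp. $q\geq 2$), and recall from Corollary~\ref{E6representatives}(v)--(vi) that $E_6^{n,m}$ and $E_6^{n+2,m}$ are nonzero precisely for $m\equiv 0\pmod 6$ (even case) or $m\equiv 0,1,2\pmod 6$ (odd case), with the four-element bases listed above in terms of the common factor $z$ from Lemma~\ref{E6subquotient}. For $m$ outside these classes both groups vanish and $\Phi^{n,m}=0$ is trivially an isomorphism. For $m$ inside, I would check that the four target monomials $zx_1^{q+1},zx_1^qx_2,zx_1^{q-1}x_2^2,zx_2^{q+1}$ (even case) are exactly the basis of $E_6^{n+2,m}$ prescribed by Corollary~\ref{E6representatives}(v) with $q$ replaced by $q+1$, and similarly $zx_1^{q+1}y_1,zx_1^{q+1}y_2,zx_1^qx_2y_2,zx_2^{q+1}y_2$ for the odd case via part~(vi). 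Since $\Phi^{n,m}$ sends a basis to a basis, it is an isomorphism.

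One subtlety worth spelling out is that the map must be shown to respect the subquotient relations of Lemma~\ref{E6subquotient}(v)--(vi), i.e. that multiplication by $x_1$ on $E_2^{n,m}$ descends to the subquotients. This follows because $x_1$ is a permanent cycle (it lies in the base $H^*(C_3\times C_3;\FF_3)$ and is an infinite cycle of the LHSss), so multiplication by $x_1$ commutes with every differential and hence induces a map on each $E_i$, in particular on $E_6$; the explicit formulas then just record what this induced map does on the chosen basis representatives. I expect the main obstacle to be purely organizational rather than mathematical: one must be careful that the representatives chosen in Corollary~\ref{E6representatives} for degree $n$ and for degree $n+2$ are the specific monomials appearing in the formula (rather than some other representatives of the same classes), so that the stated assignment is literally multiplication by $x_1$ followed by the identification of bases, and no correction terms are needed. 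Once the indexing is aligned, the proof is a one-line appeal to Corollary~\ref{E6representatives} together with the fact that $x_1$ is a permanent cycle.
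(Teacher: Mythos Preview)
Your core argument is correct and matches the paper's one-line proof: by Corollary~\ref{E6representatives}, for $n\geq 5$ both $E_6^{n,m}$ and $E_6^{n+2,m}$ have the explicit four-element bases displayed there (or are zero), and the formula for $\Phi^{n,m}$ sends one basis bijectively to the other, so it is an $\FF_3$-linear isomorphism. Nothing more is needed, and the paper says exactly this (``by inspection'').

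However, your side claim that $\Phi^{n,m}$ ``is really multiplication by $x_1$'' and ``increases the $x_1$-exponent by one in each monomial, leaving the $x_2$-power untouched'' is wrong. Look at the last basis element in each list: $zx_2^q\mapsto zx_2^{q+1}$ and $zx_2^qy_2\mapsto zx_2^{q+1}y_2$ raise the exponent of $x_2$, not of $x_1$. Multiplication by $x_1$ would take $zx_2^q$ to $zx_1x_2^q$, which by the relations in Lemma~\ref{E6subquotient}(v) becomes $zx_1^{q-1}x_2^2$ or $zx_1^qx_2$ in $E_6^{n+2,m}$ (depending on the parity of $q$), never $zx_2^{q+1}$. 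So $\Phi$ is an abstract linear isomorphism prescribed on the chosen bases, not a ring-level operation.

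Consequently your ``subtlety'' paragraph is both unnecessary and based on a false premise. Since $\Phi^{n,m}$ is defined directly as a linear map on the vector space $E_6^{n,m}$ by specifying its values on a basis, there is nothing to check about descending from $E_2$ or commuting with differentials. The observation that $x_1$ is a permanent cycle is valid for a \emph{different} map --- multiplication by $x_1$ --- which is not $\Phi$. You may drop that paragraph entirely; the first part of your argument already suffices.
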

\begin{proof}
This follows from Corollay \ref{E6representatives} by inspection.
\end{proof}

\section{Invariant elements in $E_6$}
\label{sectioninvariants}
In this section we compute $C_8$-invariants. Recall first that the outer automorphism group of $S\cong 3^{1+2}_+$ is $\GL_2(3)$, with the matrix $\big(\!\begin{smallmatrix} a & b
\\ c & d\end{smallmatrix}\!\big)$ acting as $A\mapsto A^aB^c$, $B\mapsto A^bB^d$ and $C\mapsto C^{ad-bc}$. As a generator of $C_8=\Out_{J_2}(S)$ we may choose $\big(\!\begin{smallmatrix} 1 & -1 \\ 1 & 1\end{smallmatrix}\!\big)$, which maps $A\mapsto AB$, $B\mapsto A^{-1}B$ and $C\mapsto C^{-1}$. The induced isomorphism in cohomology $E_2\rightarrow E_2$ is given by 
$$
y_1\mapsto y_1-y_2\textit{, }y_2\mapsto  y_1+y_2\textit{, }x_1\mapsto  x_1-x_2\textit{, }x_2\mapsto x_1+x_2
$$
and by $u\mapsto-u$, $t\mapsto-t$. We denote by $\alpha^{*,*}\colon E_6^{*,*}\to E_6^{*,*}$ the map induced in the sixth page. The following lemma shows that both $\Psi^2$ (Lemma \ref{equverticaliso}) and $\Phi^2$ (Lemma \ref{equhorizontaliso}) commute with $\alpha$. Hence, the computation of the $C_8$-invariants of $E_6$ is reduced to the computation of $C_8$-invariants in $\{E_6^{n,m}\}_{0\leq n\leq 8\textit{, }0\leq m\leq 11}$.

\begin{lemma}\label{commutes}
With the notations above we have:
\begin{enumerate}[(i)]
\item For each $n\geq 0$ and $m\geq 0$ the following square commutes:\label{commuteswithvertical}
$$
\xymatrix{
E_6^{n,m+12}\ar[rr]^{\alpha^{n,m+12}} && E_6^{n,m+12}\\
E_6^{n,m}\ar[u]^{(\Psi^{n,m})^2}\ar[rr]^{\alpha^{n,m}}&&E_6^{n,m}\ar[u]_{(\Psi^{n,m})^2}.
}
$$
\item For each $n\geq 5$ and $m\geq 0$ the following square commutes:\label{commuteswithhorizontal}
$$
\xymatrix{
E_6^{n,m}\ar[r]^{(\Phi^{n,m})^2} & E_6^{n+4,m}\\
E_6^{n,m}\ar[u]^{\alpha^{n,m}}\ar[r]^{(\Phi^{n,m})^2}&E_6^{n+4,m}\ar[u]_{\alpha^{n+4,m}}.
}
$$
\end{enumerate}
\end{lemma}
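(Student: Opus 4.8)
The plan is to derive both commuting squares from one structural fact: $\alpha$ is the endomorphism of the page $E_6$ induced by an automorphism of the central extension (\ref{centralextension31+2+}). Indeed, the chosen generator of $C_8=\Out_{J_2}(S)$ normalises $Z(S)=\langle C\rangle$, hence induces a compatible automorphism of $C_3\times C_3$ and thus a self-morphism of the Lyndon-Hochschild-Serre spectral sequence $E_*$; in particular $\alpha=\alpha^{*,*}\colon E_6^{*,*}\to E_6^{*,*}$ is a homomorphism of bigraded $\FF_3$-algebras. Part (i) is then formal, whereas part (ii) needs a short explicit computation because, unlike $\Psi$, the horizontal periodicity map $\Phi$ of Lemma \ref{equhorizontaliso} is not a cup product.

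For (i): by Lemma \ref{equverticaliso} the class $t^3$ is a permanent cycle, so $t^3$ survives to $E_6$ and $\Psi^{n,m}$ is cup product with $t^3$; thus $(\Psi^{n,m})^2$ is cup product with $t^6$. On $E_2$ one has $\alpha(t)=-t$, hence $\alpha(t^3)=-t^3$ on $E_6$ as well, and therefore $\alpha(t^6)=(-t^3)^2=t^6$. Since $\alpha$ is multiplicative, for every $w\in E_6^{n,m}$,
$$
\alpha^{n,m+12}\bigl((\Psi^{n,m})^2(w)\bigr)=\alpha(t^6w)=\alpha(t^6)\,\alpha(w)=t^6\,\alpha^{n,m}(w)=(\Psi^{n,m})^2\bigl(\alpha^{n,m}(w)\bigr),
$$
which is the asserted commutativity. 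This also explains why the square is stated for $(\Psi^{n,m})^2$ and not for $\Psi^{n,m}$: as $\alpha(t^3)=-t^3$, the map $\Psi^{n,m}$ only anticommutes with $\alpha$, i.e.\ $\alpha\circ\Psi^{n,m}=-\Psi^{n,m}\circ\alpha$, so it is $t^6$ rather than $t^3$ that is $\alpha$-invariant; equivalently, the effective vertical period is $12$, not $6$.

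For (ii): the key observation is that, with respect to the bases listed in Corollary \ref{E6representatives}, the map $\Phi^{n,m}$ carries the listed basis of $E_6^{n,m}$ to the listed basis of $E_6^{n+2,m}$ by the evident label-preserving bijection (incrementing the exponent of $x_1$ on the first three basis elements, of $x_2$ on the fourth). Consequently $(\Phi^{n,m})^2$ is represented by the identity $4\times4$ matrix, and the square in (ii) becomes the assertion that, for each fixed $m$ and each $n\geq5$, the matrix of $\alpha^{n,m}$ in the basis of Corollary \ref{E6representatives} coincides with that of $\alpha^{n+4,m}$. Using the vertical periodicity (Lemma \ref{equverticaliso}) together with part (i), and the fact that $E_6^{n,m}=0$ outside the residue classes of $m$ permitted by Lemma \ref{E6subquotient}, this reduces to finitely many pairs, namely $n\in\{5,6,7,8\}$ and a short list of values of $m$. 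For each of these I would compute $\alpha^{n,m}$ directly: write a basis element as a surviving class $z$ times a monomial in $x_1,x_2,y_1,y_2$; apply the substitutions $x_1\mapsto x_1-x_2$, $x_2\mapsto x_1+x_2$, $y_1\mapsto y_1-y_2$, $y_2\mapsto y_1+y_2$, $u\mapsto-u$, $t\mapsto-t$ recorded just before the statement; expand; and reduce modulo the relations $zx_1^ix_2^{q-i}=zx_1^{i+2}x_2^{q-i-2}$ of Lemma \ref{E6subquotient} and their $y$-analogues. The binomial coefficient sums that appear, reduced modulo $3$ via $2\equiv-1$, depend on $q=\lfloor n/2\rfloor$ only through its parity; since $n\mapsto n+4$ replaces $q$ by $q+2$, this gives the required equality of matrices.

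The one genuinely laborious step is the bookkeeping in (ii): after applying $\alpha$ one has a true polynomial in $x_1,x_2$ (times $z$, and possibly a $y_i$) that must be rewritten in the chosen basis by iterating the relations of Lemma \ref{E6subquotient}, and one must verify that the coefficients surviving this reduction modulo $3$ do not depend on $q$ beyond its parity, the point being that the additional monomials that enter along the way carry coefficients divisible by $3$. The scalar $\alpha(z)=\pm z$ coming from $u\mapsto-u$, $t\mapsto-t$ is the same on both sides of the square and cancels, so it plays no role. I expect the only other delicate point to be the smallest cases $n=5,6,7,8$ (for $n\leq4$ the relevant quotients degenerate and the statement is not claimed), where the uniform shape of the matrices must be checked by hand; the remaining values of $n$ then follow from the periodicity just described.
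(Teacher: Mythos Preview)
Your treatment of part (i) is correct and is exactly the paper's argument: $\alpha$ is multiplicative, $\alpha(t^6)=t^6$, hence cup product by $t^6$ commutes with $\alpha$.

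For part (ii), the framework you set up is right and coincides with the paper's: since $(\Phi^{n,m})^2$ is the identity on the ordered bases of Corollary~\ref{E6representatives}, the square commutes iff the matrix of $\alpha^{n,m}$ equals that of $\alpha^{n+4,m}$. The reduction in $m$ via part~(i) to $0\le m\le 11$ is also fine. However, your claimed reduction in $n$ to $\{5,6,7,8\}$ is not valid: verifying those four cases only compares $\alpha^{5,m}$ with $\alpha^{9,m}$, $\alpha^{6,m}$ with $\alpha^{10,m}$, etc., and says nothing about $\alpha^{9,m}$ versus $\alpha^{13,m}$. There is no a~priori horizontal periodicity of $\alpha$ to appeal to --- that is precisely what (ii) asserts --- so the last sentence (``the remaining values of $n$ then follow from the periodicity just described'') is circular. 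What actually has to be proved is your own intermediate claim that, after expanding and reducing, the answer depends only on the parity of $q=\lfloor n/2\rfloor$; this is a statement for \emph{all} $q\ge2$ and needs an argument, not a finite check.

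The paper does this cleanly: using the relations in Lemma~\ref{E6subquotient}(v),(vi) it proves by induction on $q\ge2$ the closed formula in $E_6^{2q,0}$
\[
(x_1\pm x_2)^q=\begin{cases} x_1^q+x_2^q\mp x_1^{q-1}x_2,& q\text{ even},\\ x_1^q\pm x_2^q,& q\text{ odd},\end{cases}
\]
which makes the dependence on the parity of $q$ explicit. From this formula one verifies $(\Phi^{n,m})^2\circ\alpha^{n,m}=\alpha^{n+4,m}\circ(\Phi^{n,m})^2$ on each basis element of Corollary~\ref{E6representatives} by a direct one-line calculation (the paper does the case $b=x_1^q$, $q$ even, as an example). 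Your plan becomes a complete proof once you replace the spurious reduction to $n\in\{5,6,7,8\}$ with this inductive formula; the remaining bookkeeping you describe is then exactly what the paper carries out.
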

\begin{proof}
Part (\ref{commuteswithvertical}) is a direct consequence of $\alpha(t^6)=\alpha(t)^6=(-t)^6=t^6$. For part (\ref{commuteswithhorizontal}) the situation is more involved. By induction and using the relations in $(v)$ and $(vi)$ of Lemma \ref{E6subquotient} one can show that the following formula hold in $E_6^{2q,0}$ for $q\geq 2$:
$$
(x_1\pm x_2)^q=\begin{cases}
x_1^q+x_2^q\mp x_1^{q-1}x_2,&\textit{if $q$ is even}\\
x_1^q\pm x_2^q,&\textit{if $q$ is odd.}
\end{cases}
$$
From this, it is a direct computation to show that the equation
$(\Phi^{n,m})^2(\alpha^{n,m}(b))=\alpha^{n+4,m}((\Phi^{n,m})^2(b))$ holds for each generator $b$ of the basis $B$ described in points $(v)$ and $(vi)$ of Corollary \ref{E6representatives}. For instance, for $q$ even and the element $x_1^q$ 
we have:
$$\Phi^2(\alpha(x_1^q))=\Phi^2((x_1-x_2)^q)=\Phi^2(x_1^q+x_2^q+x_1^{q-1}x_2)=x_1^{q+2}+x_2^{q+2}+x_1^{q+1}x_2$$
and
$$\alpha(\Phi^2(x_1^q))=\alpha(x_1^{q+2})=(x_1-x_2)^{q+2}=x_1^{q+2}+x_2^{q+2}+x_1^{q+1}x_2.$$
%
\end{proof}

\begin{lemma} \label{E6invariantscorner}With the notations above, the following table gives representatives of classes that form an $\FF_3$-basis of $(E_6^{n,m})^{C_8}$ for $0\leq n\leq 8$ and $0\leq m\leq 11$:  
{\small
$$
\xymatrix@=0pt{
&&&&&&&&&\\
  11 & & & & & & & & & \\
  10 & & & & & & & & &\\ 
  9 & & &w_{2,9}& & & & & &\\ 
  8 & & & & & & & & t^4w_{7,0} &\\ 
  7 & & & & & & & & ut^3w_{7,0} &\\ 
  6 & & & &w_{3,6}&w_{4,6}& & & t^3w_{7,1}& w_{8,6}\\ 
  5 & & & & & & & & &\\ 
  4 & & & & & & & & &\\ 
  3 & & & & & & & & &\\ 
  2 & & & &w_{3,2}& & & & tw_{7,1}& \\ 
  1 & & &w_{2,1} &w_{3,1}& & & & uw_{7,1} &\\ 
    & & &        &w_{3,1}^- & & & & &\\
  0 &1 & & &  & & &&w_{7,0} &w_{8,0}\\
    &0 &1 &2 &3 &4 &5 &6 &7 &8
\ar@{-}"1,2"+<-5pt,-2pt>;"14,2"+<-5pt,-5pt>;
\ar@{-}"1,2"+<5pt,-2pt>;"14,2"+<5pt,-5pt>;
\ar@{-}"1,3"+<5pt,-2pt>;"14,3"+<5pt,-5pt>;
\ar@{-}"1,4"+<10pt,-2pt>;"14,4"+<10pt,-5pt>;
\ar@{-}"1,5"+<10pt,-2pt>;"14,5"+<10pt,-5pt>;
\ar@{-}"1,6"+<10pt,-2pt>;"14,6"+<10pt,-5pt>;
\ar@{-}"1,7"+<5pt,-2pt>;"14,7"+<5pt,-5pt>;
\ar@{-}"1,8"+<5pt,-2pt>;"14,8"+<5pt,-5pt>;
\ar@{-}"1,9"+<16pt,-2pt>;"14,9"+<16pt,-5pt>;
\ar@{-}"1,10"+<10pt,-2pt>;"14,10"+<10pt,-5pt>;
\ar@{-}"1,2"+<-5pt,-2pt>;"1,10"+<10pt,-2pt>;
\ar@{-}"2,2"+<-5pt,-5pt>;"2,10"+<10pt,-5pt>;
\ar@{-}"3,2"+<-5pt,-5pt>;"3,10"+<10pt,-5pt>;
\ar@{-}"4,2"+<-5pt,-5pt>;"4,10"+<10pt,-5pt>;
\ar@{-}"5,2"+<-5pt,-5pt>;"5,10"+<10pt,-5pt>;
\ar@{-}"6,2"+<-5pt,-5pt>;"6,10"+<10pt,-5pt>;
\ar@{-}"7,2"+<-5pt,-5pt>;"7,10"+<10pt,-5pt>;
\ar@{-}"8,2"+<-5pt,-5pt>;"8,10"+<10pt,-5pt>;
\ar@{-}"9,2"+<-5pt,-5pt>;"9,10"+<10pt,-5pt>;
\ar@{-}"10,2"+<-5pt,-5pt>;"10,10"+<10pt,-5pt>;
\ar@{-}"11,2"+<-5pt,-5pt>;"11,10"+<10pt,-5pt>;
\ar@{-}"13,2"+<-5pt,-7pt>;"13,10"+<10pt,-7pt>;
\ar@{-}"14,2"+<-5pt,-5pt>;"14,10"+<10pt,-5pt>;
}
$$
}
where 
$$
w_{2,1}=uy_1y_2, w_{3,1}=uy_1x_1+ uy_2x_2, w_{3,1}^-=uy_1x_2- uy_2x_1,
$$
$$
w_{3,2}=ty_1x_1+ ty_2x_2, w_{3,6}=t^3(y_1x_1+ y_2x_2), w_{4,6}=t^3(x_1^2+x_2^2),
$$
$$w_{2,9}=ut^4y_1y_2, w_{7,0}=x_1^3y_1+x_2^3y_2-x_1^2x_2y_2, w_{7,1}=x_1^3y_1+x_2^3y_2,$$ 
$$
w_{8,0}=x_1^4+x_2^4-x_1^2x_2^2, w_{8,6}=t^3x_1^4+t^3x_2^4.
$$
\end{lemma}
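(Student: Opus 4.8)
The plan is to compute, cell by cell, the fixed points of the order-$8$ linear automorphism $\alpha$ on each $E_6^{n,m}$ with $0\le n\le 8$ and $0\le m\le 11$, using the explicit bases of Corollary \ref{E6representatives} and the explicit action of the chosen generator of $C_8$ on $u,t,y_1,y_2,x_1,x_2$. By Lemma \ref{commutes}, the vertical map $(\Psi^{n,m})^2$ and the horizontal map $(\Phi^{n,m})^2$ intertwine $\alpha$, so every cell outside this finite range is obtained from a cell inside it by multiplication by $t^6$ (vertically, period $12$ in $m$) or by $x_1^2$-type operators (horizontally, period $4$ in $n$ once $n\ge 5$); hence it suffices to treat the finite box. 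Most cells are zero by Corollary \ref{E6representatives}, so only a handful of small-dimensional spaces need to be examined.

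The core of the argument is linear algebra over $\FF_3$ in each nonzero cell. First I would record the substitution $\alpha$ explicitly on monomials: on $E_2$ it is $u\mapsto -u$, $t\mapsto -t$, $y_1\mapsto y_1-y_2$, $y_2\mapsto y_1+y_2$, $x_1\mapsto x_1-x_2$, $x_2\mapsto x_1+x_2$, and this descends to $E_6$. For each cell I write $\alpha$ as a matrix in the chosen basis $B$, reduce it modulo the relations of Lemma \ref{E6subquotient} (this is where the relations $x_1^ix_2^{q-i}\equiv x_1^{i+2}x_2^{q-i-2}$ and the $y$-relations are used, exactly as in the proof of Lemma \ref{commutes}), and compute $\ker(\alpha-\id)$. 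For instance, in $E_6^{2,1}=\langle uy_1y_2\rangle$ one checks $\alpha(uy_1y_2)=(-u)(y_1-y_2)(y_1+y_2)=-u(y_1^2-y_2^2)$; since $y_i^2=0$ in the exterior algebra this must be re-expressed, giving a scalar, and the fixed line is spanned by $w_{2,1}=uy_1y_2$ (up to the sign bookkeeping that makes $\alpha$ act by $+1$). Similarly one verifies that $uy_1x_1+uy_2x_2$ and $uy_1x_2-uy_2x_1$ are $\alpha$-invariant in $E_6^{3,1}$, that $ty_1x_1+ty_2x_2$ spans the invariants of $E_6^{3,2}$, and—the slightly longer cases—that $x_1^3y_1+x_2^3y_2-x_1^2x_2y_2$ and $x_1^3y_1+x_2^3y_2$ span the two-dimensional invariant spaces in $E_6^{7,0}$ and $E_6^{7,1}$, and that $x_1^4+x_2^4-x_1^2x_2^2$ and $t^3x_1^4+t^3x_2^4$ give the invariants in $E_6^{8,0}$ and $E_6^{8,6}$; the binomial identities $(x_1\pm x_2)^q=x_1^q+x_2^q\mp x_1^{q-1}x_2$ for $q$ even (established in Lemma \ref{commutes}) and $(x_1\pm x_2)^q=x_1^q\pm x_2^q$ for $q$ odd are the key simplifications. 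The remaining named generators $w_{2,9}=ut^4y_1y_2$, $w_{3,6}=t^3(y_1x_1+y_2x_2)$, $w_{4,6}=t^3(x_1^2+x_2^2)$, $w_{8,6}=t^3x_1^4+t^3x_2^4$ are then handled by multiplying the lower invariants by the $\alpha$-fixed classes $t^3$ (note $\alpha(t^3)=-t^3$, so one actually uses $t^6$-periodicity or pairs $t^3$ with an odd-$u$ factor) and using that $\alpha$ is multiplicative on the spectral sequence.

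The main obstacle is bookkeeping rather than conceptual difficulty: one must be scrupulous about (a) the sign with which $\alpha$ acts, since $u\mapsto -u$ and $t\mapsto -t$ mean that a monomial can be $\alpha$-invariant only after it has been packaged with the right parity of $u$'s and $t$'s—this is exactly why the table pairs $t^3$ with $u$ in $ut^3w_{7,0}$ and why $t^4w_{7,0}$ (rather than $t^3$ times something) appears in $E_6^{8,8}$; and (b) reducing every monomial to the normal-form basis of Corollary \ref{E6representatives} before comparing, since the naive image of a basis element under $\alpha$ is a linear combination that only becomes canonical after applying the Lemma \ref{E6subquotient} relations. I would organize the computation as a single table walk: list the $\le 15$ nonzero cells in the box, in each one write the $4\times 4$-or-smaller matrix of $\alpha-\id$ in normal form, and read off the kernel; the displayed representatives are then verified by direct substitution, which is the content of the one or two sample computations I would include in the write-up. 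Finally, I would note that all twelve named classes $w_{i,j}$ are genuinely nonzero in the relevant $E_6^{n,m}$ (they are among the normal-form basis elements or visibly independent combinations of them), so the table is complete and no invariant cell has been missed.
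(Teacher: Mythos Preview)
Your approach is correct and is essentially the only reasonable one: the paper states this lemma without proof, so a cell-by-cell computation of $\ker(\alpha-\id)$ on each nonzero $E_6^{n,m}$ in the box, using the bases of Corollary~\ref{E6representatives} and the relations of Lemma~\ref{E6subquotient}, is exactly what is required.

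Two places deserve more care. First, your sample computation $\alpha(uy_1y_2)=(-u)(y_1-y_2)(y_1+y_2)=-u(y_1^2-y_2^2)$ is wrong as written: the $y_i$ anticommute, so $(y_1-y_2)(y_1+y_2)=y_1y_2-y_2y_1=2y_1y_2=-y_1y_2$, giving $\alpha(uy_1y_2)=uy_1y_2$ directly. This is harmless here but the same slip in a larger cell would derail the matrix computation.

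Second, your shortcut for the cells with $m\ge 6$ via $t^3$ is stated confusingly. Since $\alpha(t^3)=-t^3$, multiplication by $t^3$ is an $\FF_3$-isomorphism $E_6^{n,m}\to E_6^{n,m+6}$ that \emph{anti}-commutes with $\alpha$; hence it exchanges the $(+1)$- and $(-1)$-eigenspaces. Concretely, $(E_6^{n,m+6})^{C_8}\cong\ker(\alpha+\id)$ on $E_6^{n,m}$, not $\ker(\alpha-\id)$. This is why, for example, $(E_6^{3,0})^{C_8}=0$ but $(E_6^{3,6})^{C_8}=\langle w_{3,6}\rangle$: the element $y_1x_1+y_2x_2$ lies in the $(-1)$-eigenspace at $m=0$, and $t^3$ carries it to an invariant at $m=6$. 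Your parenthetical about ``pairing $t^3$ with an odd-$u$ factor'' does not quite capture this; the clean statement is that the computation at rows $6,7,8,9$ reduces to computing $\ker(\alpha+\id)$ at rows $0,1,2,3$. Once this is said correctly, the bookkeeping goes through and every entry in the table is verified by a $4\times 4$-or-smaller linear system, as you describe.
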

%
%

From this table and Lemma \ref{commutes}, the Poincar\'{e} series $P(t)$ of $J_2$ is as follows:
{\small
$$
\sum_{i=0}^\infty t^{12i}(1+t^3+2t^4+t^5+t^9+t^{10}+t^{11}+\sum_{j=0}^\infty t^{4j}(t^7+2t^8+t^9+t^{13}+2t^{14}+t^{15})),
$$
}
from where we obtain
\begin{equation}\label{equ_J2Ppoincareseries}
P(t)=\frac{1+t^3+t^4+t^5+t^9+t^{10}+t^{11}+t^{14}}{(1-t^4)(1-t^{12})}.
\end{equation}

\section{Ring structure}
\label{sectionring}

In this section we give a description of the ring $H^*(J_2;\FF_3)$ both as a subring of $H^*(3^{1+2}_+;\FF_3)$ and as an abstract ring via generators and relations. Recall that the ring $H^*(3^{1+2}_+;\FF_3)$ was described by Leary as follows:
\begin{theorem}[{\cite[Theorem 7]{L1992}}]\label{coho31+2+}
The ring $H^*(3^{1+2}_+;\FF_3)$ is generated by elements $y$, $y'$, $x$, $x'$, $Y$,
$Y'$, $X$, $X'$, $z$ with $$ \deg(y)=\deg(y')=1,
\deg(x)=\deg(x')=\deg(Y)=\deg(Y')=2,$$ $$ \deg(X)=\deg(X')=3
\mbox{ and } \deg(z)=6$$ subject to the following relations: $$
yy'=0,xy'=x'y,yY=y'Y'=xy',yY'=y'Y, $$ $$ YY'=xx',Y^2=xY',Y'^2=x'Y,
$$ $$ yX=xY-xx',y'X'=x'Y'-xx', $$ $$ Xy'=x'Y-xY', X'y=xY'-x'Y,$$
$$ XY=x'X,X'Y'=xX',XY'=-X'Y, xX'=-x'X, $$ $$ XX'=0,
x(xY'+x'Y)=-xx'^2, x'(xY'+x'Y)=-x'x^2, $$ $$ x^3y'-x'^3y=0,
x^3x'-x'^3x=0,$$ $$ x^3Y'+x'^3Y=-x^2x'^2 \mbox{ and }
x^3X'+x'^3X=0. $$ 
\end{theorem}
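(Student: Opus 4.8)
The plan is to run the Lyndon--Hochschild--Serre spectral sequence $E_*$ of the central extension \eqref{centralextension31+2+} and to upgrade the description of $E_6=E_\infty$ obtained in Lemmas \ref{E6corner} and \ref{E6subquotient} and Corollary \ref{E6representatives} to a presentation of $H^*(3^{1+2}_+;\FF_3)$. A preliminary bookkeeping step is to record the additive structure: the dimensions in $E_\infty$ together with the periodicities of Lemmas \ref{equverticaliso} and \ref{equhorizontaliso} give $\dim_{\FF_3}H^k(3^{1+2}_+;\FF_3)$ for all $k$; one finds $\dim_{\FF_3}H^k-\dim_{\FF_3}H^{k-6}=8$ for $k\geq 5$, hence the Poincar\'e series
$$
P(t)=\frac{1+2t+3t^2+4t^3+3t^4+2t^5+t^6}{(1-t^2)(1-t^6)}.
$$
This is the rational function any correct presentation must reproduce.

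Next I would fix ring generators of $H^*$ lifting a set of algebra generators of the bigraded ring $E_\infty$. Take $y,y'\in H^1$ inflated from $y_1,y_2\in H^1(C_3\times C_3)$; set $x=\beta(y)$, $x'=\beta(y')$, which lift $x_1,x_2$; choose $Y,Y'\in H^2$ lifting $uy_1,uy_2$, with signs chosen so that graded commutativity is consistent with the relation $yY'=y'Y$; put, for instance, $X=\beta(Y)$ and $X'=\beta(Y')$, whose leading terms are $ty_1,ty_2$ (since $\beta(uy_1)=ty_1-ux_1$ and $ux_1$ lies in higher filtration); and let $z\in H^6$ lift $t^3$, which is a permanent cycle by the Evens norm argument used in the proof of Lemma \ref{equverticaliso} and is, by that lemma, a non-zero-divisor in $H^*$. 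That these nine classes generate $H^*$ as an $\FF_3$-algebra follows by an induction on the filtration from the fact that their images generate the bigraded algebra $E_\infty$, which is visible from Corollary \ref{E6representatives}.

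The substantial part is verifying the relations, and since all of them have degree at most $9$, each is a finite check; still, they fall into two qualitatively different layers. A first batch already holds in $\mathrm{gr}\,H^*=E_\infty$ for dimension reasons: $yy'=0$ (because $y_1y_2=d_2u$ and $F^3H^2=0$), $xy'=x'y$ (the identity $x_1y_2=x_2y_1$ holds in $E_6^{3,0}$ by Lemma \ref{E6subquotient}(iv), and $F^4H^3=0$), and the ``periodicity'' relations $x^3x'-x'^3x=0$, $x^3y'-x'^3y=0$, encoding the identifications of Lemma \ref{E6subquotient}(v)--(vi). A second batch --- $yX=xY-xx'$, $y'X'=x'Y'-xx'$, $Xy'=x'Y-xY'$, $XX'=0$, $x^3X'+x'^3X=0$, $x^3Y'+x'^3Y=-x^2x'^2$, and their companions --- are genuine multiplicative extension problems: a product lies in a deeper filtration stratum than its factors and equals a lower-filtration class invisible in $\mathrm{gr}$. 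To pin down these correction terms I would combine restriction to the maximal subgroups $\cong C_3\times C_3$ and to the centre $C_3$, which detects most products; the Bockstein (already used to define $X,X'$), which turns the $X$-relations into $\beta$ applied to $Y$-relations; and, where needed, an explicit computation with the cochains of \eqref{centralextension31+2+} in low degrees.

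Finally, completeness. The preceding steps produce a surjection $R\twoheadrightarrow H^*(3^{1+2}_+;\FF_3)$ from the abstractly presented ring $R$. Using the relations one exhibits an explicit spanning set of $R^k$ for each $k$, bounds $\dim_{\FF_3}R^k$ from above, and compares with $P(t)$; equality in every degree forces the surjection to be an isomorphism. The main obstacle is the second batch of relations above: the spectral sequence yields only the associated graded ring, so the precise lower-filtration corrections must be extracted by separate arguments, and one has to verify at the same time that the listed relations are sufficient --- which is exactly where the tightness of the Poincar\'e-series count is indispensable.
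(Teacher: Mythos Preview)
This theorem is not proved in the present paper; it is quoted verbatim from Leary \cite[Theorem~7]{L1992} and used as input for the computation of $H^*(J_2;\FF_3)$. So there is no ``paper's own proof'' to compare against beyond the citation.

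Your sketch is, in outline, the same strategy Leary follows in \cite{L1992thesis,L1992}: compute $E_\infty$ of the central extension, lift a generating set, verify relations, and close up via a Poincar\'e-series count. Two remarks on what you wrote. First, the choices of $Y,Y'$ are not arbitrary lifts of $uy_1,\pm uy_2$: Leary takes them to be the Massey products $Y=\langle y,y,y'\rangle$ and $Y'=\langle y',y',y\rangle$ (as the paper recalls just after Theorem~\ref{coho31+2+}), and several of the degree-$3$ and degree-$4$ relations such as $yY=y'Y'=xy'$ are essentially the defining properties of these Massey products together with the action of the Bockstein. If you merely ``choose a lift'' you must then normalise it --- possible, but circular unless you have already established enough relations to know what to normalise against. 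Second, your ``second batch'' is where all the content lies, and saying you would use restriction to the elementary abelian subgroups, the Bockstein, and if necessary explicit cochains is accurate but is really a summary of the whole proof rather than a step in it; Leary's argument in \cite{L1992thesis} spends most of its length here. So your proposal is a correct high-level plan rather than a proof, and it is the plan Leary carried out.
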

To study the ring of invariants $H^*(3^{1+2}_+;\FF_3)^{C_8}$ we need the action of the generator of $C_8$ on the generators described in the previous theorem. This action is
\begin{eqnarray}\label{actionofC_8onring}
y\mapsto y-y', y'\mapsto y+y',\\
x\mapsto x-x', x'\mapsto x+x',\nonumber\\
Y\mapsto x+x'-Y-Y', Y'\mapsto x-x'+Y-Y',\nonumber\\
X\mapsto -X-X',X'\mapsto X-X',\nonumber\\
z\mapsto -z.\nonumber
\end{eqnarray}
The action is determined by \cite[Theorem 7]{L1992} as follows: $y$ and $y'$ are the group homomorphisms dual to $A$ and $B$ respectively. The degree $2$ generators $Y$ and $Y'$ are the triple Massey products $Y=\langle y,y,y'\rangle$ and $Y'=\langle y',y',y\rangle$. Then $x$, $x'$, $X$ and $X'$ are the image by the Bockstein homomorphism of $y$, $y'$, $Y$ and $Y'$ respectively. To finish, the generator of $C_8$ maps $C$ to $C^{-1}$ and hence $z$ is mapped to $-z$.

Next, we determine the graded algebra $H^*(J_2;\FF_3)=H^*(3^{1+2}_+;\FF_3)^{C_8}$ using its associated bigraded algebra $E_6^{C_8}$. The link is provided as follows: the Lyndon-Hochschild-Serre spectral sequence $E_*$ converging to $H^*(3^{1+2}_+;\FF_3)$ comes equipped with a filtration $\{F^iH^n\}^{n+1}_{i=0}$ of $H^n=H^n(3^{1+2}_+;\FF_3)$  such that
$$
F^iH^n/F^{i+1}H^n\cong E_6^{i,n-i}\text{ for $i=0,\ldots,n$.}
$$
Now, as proven in \cite{D2012}, for the spectral sequence $E_*^{C_8}$ converging to $H^*(J_2;\FF_3)$, we have a filtration of $H^n(J_2;\FF_3)$ given by taking invariants in the previous filtration, $\{(F^iH^n)^{C_8}\}^{n+1}_{i=0}$, and this filtration satisfies
$$(F^iH^n)^{C_8}/(F^{i+1}H^n)^{C_8}\cong {E_6^{i,n-i}}^{C_8}\text{ for $i=0,\ldots,n$.}
$$

For a class $c\in F^iH^n\setminus F^{i+1}H^n$ set $\overline c\in F^iH^n/F^{i+1}H^n$ to be the non-zero image of $c$ in this quotient. From the previous discussion, if $c\in H^*(J_2;\FF_3)$ then $\overline{c}$ belongs to $E_6^{C_8}$. For the generators in Theorem \ref{coho31+2+} we have the following, where the last line is consequence of \cite[Lemma 2.4, Lemma 2.13]{L1992thesis}: \begin{eqnarray}\label{equ_overlinegenerators}
\overline{y}=y_1, \overline{y'}=y_2, \overline{x}=x_1, \overline{x'}=x_2,\\
\overline Y=uy_1, \overline{Y'}=-uy_2, \overline{X}=ty_1, \overline{X'}=-ty_2, \overline{z}=t^3.\nonumber
\end{eqnarray}

To find generators for $H^*(J_2;\FF_3)$ it is enough to find generators of $E_6^{C_8}$ and then lift them. A set of generators for $E_6^{C_8}$ is given by (see Table \ref{table_E6_21x21} and Lemma \ref{E6invariantscorner}):
$$
w_{2,1},w_{3,1},w^{-}_{3,1},w_{3,2},w_{3,6},w_{4,6},w_{2,9},t^6,
$$
$$
w_{7,0},w_{8,0},uw_{7,1},tw_{7,1},t^3w_{7,1},ut^3w_{7,0},t^4w_{7,0},w_{8,6},w_{11,0},w_{12,0},
$$
where $w_{11,0}=\Phi^2(w_{7,0})=x_1^5y_1+x_2^5y_2-x_1^4x_2y_2$, $w_{12,0}=\Phi^2(w_{8,0})=x_1^6+x_2^6-x_1^4x_2^2$. Nevertheless, as we show in the next result, it is enough to lift the first $8$ generators listed above. To this purpose, note that the Poincar\'{e} series of $H^*(J_2;\FF_3)$ is (\ref{equ_J2Ppoincareseries})
$$
P(t)=1+t^3+2t^4+t^5+t^7+2t^8+2t^9+t^{10}+2t^{11}+3t^{12}+\ldots.
$$
From this expression we deduce that the lifts of the generators $w_{2,1},w_{3,1},w^{-}_{3,1},w_{3,2}$ are the linear generators of  $H^n(3^{1+2}_+;\FF_3)^{C_8}$ for $n=3,4,5$. For the generators $w_{3,6},w_{4,6},w_{2,9}$, as they are multiples of $t^3$, an appropriate guess is that their lifts are of the form $zc$, where $c$ is mapped to $-c$ under the action and $c$ has total degree $3$, $4$ or $5$. To sum up, we need to solve some linear algebra problems in $H^n(3^{1+2}_+;\FF_3)$ with $n\in \{3,4,5\}$. Basis for these vector spaces are described in \cite[Lemma 2.4(2), proof of Theorem 2.15]{L1992thesis}.


\begin{theorem}\label{thmcohoJ2subring}
The ring $H^*(J_2;\FF_3)$ is the subring of $H^*(3^{1+2}_+;\FF_3)$ generated by elements $a,b,c,d,e,f,g,h$ with
$$a=Yy'-xy-x'y',b=Yx-Y'x',c=x^2+x'^2+xY'+x'Y$$
$$d=Xx-X'x',e=z(yx+y'x'),f=z(x^2+x'^2),$$
$$g=-z(Xx-X'x'+YX'),h=z^2$$
and degrees 
$$
\deg(a)=3, \deg(b)=\deg(c)=4, \deg(d)=5,
$$ 
$$
\deg(e)=9, \deg(f)=10, \deg(g)=11, \deg(h)=12.
$$
\end{theorem}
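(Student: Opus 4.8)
The plan is to prove the theorem in three steps: (1) the elements $a,b,c,d,e,f,g,h$ are $C_8$-invariant, hence lie in $H^*(J_2;\FF_3)=H^*(3^{1+2}_+;\FF_3)^{C_8}$; (2) they lift the first eight generators of $E_6^{C_8}$ listed before the statement; and (3) the subring they generate is already all of $H^*(J_2;\FF_3)$.

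Step (1) is a direct substitution of the action \eqref{actionofC_8onring} into each of the eight expressions, followed by simplification with Leary's relations of Theorem \ref{coho31+2+}: for example $h=z^2\mapsto(-z)^2=z^2$; $f=z(x^2+x'^2)$ is fixed because $z\mapsto-z$ while $x^2+x'^2\mapsto(x-x')^2+(x+x')^2=-(x^2+x'^2)$; the remaining (longer) cases are of the same routine character, the key point in each being that the degree-$\le 2$ factors and $z$ change sign compatibly. For step (2) I would use \eqref{equ_overlinegenerators}, the multiplicativity of the filtration, and the isomorphisms $F^iH^m/F^{i+1}H^m\cong E_6^{i,m-i}$ to isolate the lowest-filtration term of each generator and compute its image in $E_6$: one finds $\overline a=w_{2,1}$ (the leading term is $\overline{Yy'}=uy_1y_2$, since $\overline{xy},\overline{x'y'}$ have filtration $3>2$ and $uy_1y_2\ne0$ in $E_6^{2,1}$ by Lemma \ref{E6corner}), and likewise $\overline b=w_{3,1}$, $\overline c=w_{3,1}^-$, $\overline d=w_{3,2}$, $\overline e=w_{3,6}$, $\overline f=w_{4,6}$, $\overline g=w_{2,9}$, $\overline h=t^6$.

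Step (3) is the heart of the argument. Let $A\subseteq H^*(J_2;\FF_3)$ be the subring generated by $a,\dots,h$, filtered by $F^iA=A\cap F^iH^*(J_2;\FF_3)$; then $\operatorname{gr}(A)$ is a subalgebra of $E_6^{C_8}=\operatorname{gr}(H^*(J_2;\FF_3))$, and since both sides are finite-dimensional in each degree and $A\subseteq H^*(J_2;\FF_3)$, it suffices to prove $\operatorname{gr}(A)=E_6^{C_8}$: this forces $\dim_{\FF_3}A^n=\dim_{\FF_3}(E_6^{C_8})^n=\dim_{\FF_3}H^n(J_2;\FF_3)$ for all $n$, hence $A=H^*(J_2;\FF_3)$. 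By step (2), $\operatorname{gr}(A)$ contains $w_{2,1},w_{3,1},w_{3,1}^-,w_{3,2},w_{3,6},w_{4,6},w_{2,9},t^6$, so — as $E_6^{C_8}$ is generated by the eighteen elements listed before the statement — it remains to put the other ten, namely $w_{7,0},w_{8,0},uw_{7,1},tw_{7,1},t^3w_{7,1},ut^3w_{7,0},t^4w_{7,0},w_{8,6},w_{11,0},w_{12,0}$, into $\operatorname{gr}(A)$. For each I would exhibit a monomial in $a,\dots,h$ of the same total degree whose $E_6$-leading term is that generator up to a unit. The mechanism is uniform: the product of the leading terms of the factors vanishes in $E_6$ — typically because $u^2=0$, $y_i^2=0$, or because of a relation of Lemma \ref{E6subquotient} — so the product has strictly larger filtration, and Corollary \ref{E6representatives} identifies the unique total-degree-$n$ subspace of higher filtration in which its invariant leading term can sit; one then computes the monomial in $H^*(3^{1+2}_+;\FF_3)$ via Theorem \ref{coho31+2+} and checks it is nonzero. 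I expect, for instance, $\overline{ac}=\pm w_{7,0}$, $\overline{cd}=\pm tw_{7,1}$, $\overline{eb}=\pm t^3w_{7,1}$, $\overline{bf}=\pm ut^3w_{7,0}$, $\overline{fd}=\pm t^4w_{7,0}$, $\overline{cf}=\pm w_{8,6}$, with $w_{8,0},uw_{7,1},w_{11,0},w_{12,0}$ arising from suitable quadratic or cubic monomials in $a,b,c,d$ and the products just found.

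The main obstacle is precisely the bookkeeping in step (3): one must carry out a fair number of products in $H^*(3^{1+2}_+;\FF_3)$ with Leary's intricate relations, keep the graded-commutativity signs straight, and confirm non-vanishing modulo the next stage of the filtration — the delicate point being the filtration jumps, so that the leading term of a product is genuinely not the product of the leading terms. A minor additional check is that the eighteen elements do generate $E_6^{C_8}$, which follows from Lemma \ref{E6invariantscorner} together with the commuting squares of Lemma \ref{commutes} and the vertical and horizontal periodicities of Lemmas \ref{equverticaliso} and \ref{equhorizontaliso}.
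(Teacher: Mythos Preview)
Your three-step plan is exactly the paper's, and your associated-graded argument in step~(3) is sound: once $\operatorname{gr}(A)$ contains a generating set for $E_6^{C_8}$ you are done by a dimension count. The paper organises step~(3) slightly differently --- rather than relying on the eighteen listed generators, it verifies by induction that multiplication by $c$ at the ring level realises the horizontal periodicity $\Phi^2$ (computing $\overline{c^i},\overline{-ac^i},\overline{bc^i},\overline{dc^i},\overline{ec^i},\overline{fc^i},\overline{gac^i},\overline{-gc^i}$ in closed form for all $i$) and multiplication by $h$ realises $\Psi^2$ --- but this is a matter of presentation, not substance.

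Two corrections to your sketch of step~(3). First, one of your guessed monomials fails: $be=0$ in $H^*(3^{1+2}_+;\FF_3)$ (indeed this is one of the relations in the next theorem), so $\overline{eb}$ does not give $t^3w_{7,1}$; the correct lift is $\overline{ec}=t^3w_{7,1}$. Second, your description of the ``uniform mechanism'' via filtration jumps is not quite uniform. For $bf$ and $fd$ there is no jump: the product of the leading terms $\overline b\cdot\overline f$ (respectively $\overline f\cdot\overline d$) is already the nonzero element $ut^3w_{7,0}$ (respectively $t^4w_{7,0}$) in its expected bidegree. Conversely, for $c^2$ the jump is double: $\overline c^{\,2}=0$ in $E_6^{6,2}$, and one finds after simplification with Leary's relations that $c^2=x^4-x^2x'^2+x'^4$, so the leading term sits in $E_6^{8,0}$, skipping $E_6^{7,1}$ entirely. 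So the filtration level cannot be predicted from the table alone; one must actually compute the product in $H^*(3^{1+2}_+;\FF_3)$ and then read off where its nonzero leading term lands. The paper's choice of monomials is $-ac,\,c^2,\,bc,\,dc,\,ec,\,fc,\,ga,\,-gc$ (and powers of $c$ for $w_{11,0},w_{12,0}$).
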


\begin{proof}
It is a straightforward computation using the action of $C_8$ (Equation \ref{actionofC_8onring}) and the presentation of $H^*(3^{1+2}_+;\FF_3)$ (Theorem \ref{coho31+2+}) that the elements in the statement are indeed invariant under this action. For instance, for $g=-z(xX-x'X'+YX')$, its image under the action is 
\begin{align*}
z[(x-x')(-X-X')-(x+x')(X-X')+(x+x'-Y-Y')(X-X')]\\
=z(-xX-xX'+x'X+x'X' -xX+xX'-x'X+x'X'+xX-xX'\\
+x'X-x'X'-YX+YX'-Y'X+Y'X' )\\
=z(-xX-xX'+x'X+x'X'-YX+YX'-Y'X+Y'X' )\\
=z(-xX-xX'+x'X+x'X'-x'X-YX'+xX' )\\
=z(-xX+x'X'-YX')=-z(xX-x'X'+YX').
\end{align*}
Now, using Equation \ref{equ_overlinegenerators} we obtain:
$$
\overline{a}=uy_1y_2,\overline{b}=uy_1x_1+uy_2x_2, \overline{c}=uy_1x_2-uy_2x_1,
$$
$$\overline{d}=t(y_1x_1+y_2x_2),\overline{e}=t^3(y_1x_1+y_2x_2),\overline{f}=t^3(x_1^2+x_2^2),$$
$$
\overline{g}=ut^4y_1y_2, \overline{h}=t^6,
$$
or, with the notations of Lemma \ref{E6invariantscorner},
$$
\overline{a}=w_{2,1},\overline{b}=w_{3,1}, \overline{c}=w^{-}_{3,1},
\overline{d}=w_{3,2},\overline{e}=w_{3,6},\overline{f}=w_{4,6},
\overline{g}=w_{2,9},\overline{h}=t^6.
$$
To prove that $a,b,c,d,e,f,g,h$ generate $H^*(J_2;\FF_3)$ we show that the  subalgebra $\langle a,b,c,d,e,f,g\rangle$ generates the full bigraded algebra $E_6^{C_8}$. As $\overline{h}=t^6$, multiplication in $E_6^{C_8}$ by $\overline{h}$ gives exactly the vertical periodicity $\Psi^2$ of $(a)$ in Lemma \ref{commutes}. So we just need to show that the rows from $0$ to $11$ are generated by the overlines of elements in $\langle a,b,c,d,e,f,g\rangle$. The rest of the elements in the table of Lemma \ref{E6invariantscorner} are generated as follows:
$$
\overline{-ac}=w_{7,0},\overline{c^2}=w_{8,0},\overline{bc}=uw_{7,1},\overline{dc}=tw_{7,1},$$
$$\overline{ec}=t^3w_{7,1},\overline{fc}=w_{8,6},
\overline{ga}=ut^3w_{7,0},\overline{-gc}=t^4w_{7,0}.
$$
Now, the following computation shows that the horizontal periodicity $\Phi^2$ of $(b)$ of Lemma \ref{commutes} is realized by multiplication by $c$ at the ring level:
\begin{align*}
\overline{-ac^i}=\overline{x^{2i+1}y+x'^{2i+1}y'-x^{2i}x'y'}=&\Phi^{2(i-1)}(w_{7,0})\in {E_6^{7+4(i-1),0}}^{C_8}\\
\overline{bc^i}=\overline{Yx^{2i+1}-Y'x'^{2i+1}}=&\Phi^{2(i-1)}(uw_{7,1})\in {E_6^{7+4(i-1),1}}^{C_8}\\
\overline{dc^i}=\overline{x^{2i+1}X-x'^{2i+1}X'}=&\Phi^{2(i-1)}(tw_{7,1})\in{E_6^{7+4(i-1),2}}^{C_8}\\
\overline{ec^i}=\overline{z(yx^{2i+1}+y'x'^{2i+1})}=&\Phi^{2(i-1)}(t^3w_{7,1})\in{E_6^{7+4(i-1),6}}^{C_8}\\
\overline{fc^i}=\overline{z(x^{2(i+1)}+x'^{2(i+1)})}=&\Phi^{2(i-1)}(w_{8,6})\in{E_6^{8+4(i-1),6}}^{C_8}\\
\overline{gac^i}=\overline{z(x^{2i+3}Y-x'^{2i+3}Y'+x^{2(i+1)}x'Y')}=&\Phi^{2i}(ut^3w_{7,0})\in{E_6^{7+4i,7}}^{C_8}\\
\overline{-gc^i}=\overline{z(x^{2i+1}X-x'^{2i+1}X'+x^{2i}x'X')} =& \Phi^{2(i-1)}(t^4w_{7,0}) \in{E_6^{7+4(i-1),8}}^{C_8}
\end{align*}
for $i\geq 1$ and the following for $i\geq 2$:
\begin{align*}
\overline{c^i}=\overline{x^{2i}+x'^{2i}-x^{2(i-1)}x'^2}=&\Phi^{2(i-2)}(w_{8,0})\in{E_6^{8+4(i-2),0}}^{C_8}.
\end{align*}
This finishes the proof.
\end{proof}

\begin{remark}
The action of the Steenrod algebra $\A_3$ on $H^*(J_2;\FF_3)$ can be easily deduced from its action on $H^*(3^{1+2}_+;\FF_3)$, which was described by Leary in \cite{L1992}.
\end{remark}

The previous theorem shows that the corner of the bigraded algebra $E_6^{C_8}$ is generated by the following overlined elements (cf. Lemma \ref{E6invariantscorner}): 
$$
\xymatrix@=0pt{
&&&&&&&&&&&&&\\
  11 & & & & & & & & & & & & &\\
  10 & & & & & & & & & & & & &\\ 
  9 & & &\overline{g} & & & & & & & & & &\\ 
  8 & & & & & & & & \overline{-gc} & & & &\overline{-gc^2} &\\ 
  7 & & & & & & & & \overline{ga} & & & &\overline{gac} &\\ 
  6 & & & &\overline{e} &\overline{f} & & & \overline{ec}& \overline{fc} & & &\overline{ec^2} &\overline{fc^2}\\ 
  5 & & & & & & & & & & & & &\\ 
  4 & & & & & & & & & & & & &\\ 
  3 & & & & & & & & & & & & &\\ 
  2 & & & &\overline{d}& & & & \overline{dc}& & & & \overline{dc^2}&\\ 
  1 & & &\overline{a} &\overline{b}, \overline{c} & & & & \overline{bc} & & & &\overline{bc^2} &\\ 
  0 &1 & & &  & & &&\overline{-ac} &\overline{c^2} & & &\overline{-ac^2} &\overline{c^3}\\
    &0 &1 &2 &3 &4 &5 &6 &7 &8 &9 &10 &11 &12
\ar@{-}"1,2"+<-5pt,-2pt>;"14,2"+<-5pt,-5pt>;
\ar@{-}"1,2"+<5pt,-2pt>;"14,2"+<5pt,-5pt>;
\ar@{-}"1,3"+<6pt,-2pt>;"14,3"+<6pt,-5pt>;
\ar@{-}"1,4"+<7pt,-2pt>;"14,4"+<7pt,-5pt>;
\ar@{-}"1,5"+<10pt,-2pt>;"14,5"+<10pt,-5pt>;
\ar@{-}"1,6"+<7pt,-2pt>;"14,6"+<7pt,-5pt>;
\ar@{-}"1,7"+<6pt,-2pt>;"14,7"+<6pt,-5pt>;
\ar@{-}"1,8"+<5pt,-2pt>;"14,8"+<5pt,-5pt>;
\ar@{-}"1,9"+<13pt,-2pt>;"14,9"+<13pt,-5pt>;
\ar@{-}"1,10"+<8pt,-2pt>;"14,10"+<8pt,-5pt>;
\ar@{-}"1,11"+<6pt,-2pt>;"14,11"+<6pt,-5pt>;
\ar@{-}"1,12"+<7pt,-2pt>;"14,12"+<7pt,-5pt>;
\ar@{-}"1,13"+<12pt,-2pt>;"14,13"+<12pt,-5pt>;
\ar@{-}"1,14"+<10pt,-2pt>;"14,14"+<10pt,-5pt>;
\ar@{-}"1,2"+<-5pt,-2pt>;"1,10"+<83pt,-2pt>;
\ar@{-}"2,2"+<-5pt,-5pt>;"2,10"+<83pt,-5pt>;
\ar@{-}"3,2"+<-5pt,-5pt>;"3,10"+<83pt,-5pt>;
\ar@{-}"4,2"+<-5pt,-5pt>;"4,10"+<83pt,-5pt>;
\ar@{-}"5,2"+<-5pt,-5pt>;"5,10"+<83pt,-5pt>;
\ar@{-}"6,2"+<-5pt,-5pt>;"6,10"+<83pt,-5pt>;
\ar@{-}"7,2"+<-5pt,-5pt>;"7,10"+<83pt,-5pt>;
\ar@{-}"8,2"+<-5pt,-5pt>;"8,10"+<83pt,-5pt>;
\ar@{-}"9,2"+<-5pt,-5pt>;"9,10"+<83pt,-5pt>;
\ar@{-}"10,2"+<-5pt,-5pt>;"10,10"+<83pt,-5pt>;
\ar@{-}"11,2"+<-5pt,-5pt>;"11,10"+<83pt,-5pt>;
\ar@{-}"12,2"+<-5pt,-5pt>;"12,10"+<83pt,-5pt>;
\ar@{-}"13,2"+<-5pt,-5pt>;"13,10"+<83pt,-5pt>;
\ar@{-}"14,2"+<-5pt,-5pt>;"14,10"+<83pt,-5pt>;
}
$$
By inspection, columns $11$ and $12$ are obtained from columns $7$ and $8$ via multiplication by $c$ at the ring level. In fact, we proved in Theorem \ref{thmcohoJ2subring} that the rest of the columns are obtained by multiplying by the succesive powers of $c$, and the missing rows by multiplying  by powers of $h$.

\begin{theorem}\label{thmcohoJ2abstractpresentation}
The ring $H^*(J_2;\FF_3)$ is the free graded-commutative algebra on the generators $a,b,c,d,e,f,g,h$ with 
$$
\deg(a)=3, \deg(b)=\deg(c)=4, \deg(d)=5,
$$ 
$$
\deg(e)=9, \deg(f)=10, \deg(g)=11, \deg(h)=12
$$
subject to the following relations:
$$
ab=0,b^2=0,bc+ad=0,bd=0,ae=0,be=0,fa+ce=0,bf=ag,ed=ag,
$$
$$
bg=0,fd+cg=0,dg=0,ef+ach=0,eg=had,f^2=c^2h,fg+chd=0.
$$
\end{theorem}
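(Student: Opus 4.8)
The plan is to show that the abstract free graded-commutative algebra $R=\FF_3\langle a,b,c,d,e,f,g,h\rangle$ modulo the listed relations maps onto $H^*(J_2;\FF_3)$ by sending each generator to the like-named element of Theorem \ref{thmcohoJ2subring}, and that this surjection is an isomorphism by a Poincar\'e series count. First I would check that all sixteen relations actually hold in $H^*(J_2;\FF_3)$. Since $H^*(J_2;\FF_3)\subseteq H^*(3^{1+2}_+;\FF_3)$, it suffices to substitute the explicit formulas $a=Yy'-xy-x'y'$, $b=Yx-Y'x'$, etc. from Theorem \ref{thmcohoJ2subring} into each relation and reduce using the presentation of $H^*(3^{1+2}_+;\FF_3)$ in Theorem \ref{coho31+2+}; this is a routine but lengthy polynomial computation in the nine Leary generators $y,y',x,x',Y,Y',X,X',z$. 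Granting this, there is a well-defined surjective graded algebra map $\vp\colon R\twoheadrightarrow H^*(J_2;\FF_3)$, because Theorem \ref{thmcohoJ2subring} already tells us $a,\dots,h$ generate the target.

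Next I would compute the Poincar\'e series of $R$. The sixteen relations come in ``types'': the low-degree relations $ab,b^2,bc+ad,bd$ (degrees $7,8,8,9$), the relations $ae,be,fa+ce$ and their descendants involving $e$ or $f$, and the relations $bg,fd+cg,dg,ef+ach,eg,f^2-c^2h,fg+chd$. I would argue that $R$ is a free module over the polynomial subalgebra $\FF_3[c,h]$ (note $c$ has degree $4$, $h$ degree $12$), with a finite basis that one reads off: modulo the ideal $(c,h)$ the relations force $b^2=ab=bd=bg=0$, $ad=ae=be=eg=0$, $fa=fd=f^2=ef=0$, $bf=ag=ed$, $dg=0$, and $fg=0$, so the $\FF_3[c,h]$-module generators are $1,a,b,d,e,f,g$ together with the products $ad, bf{=}ag{=}ed$ — wait, those vanish mod $(c,h)$, so more carefully I would list the monomials in $a,b,d,e,f,g$ surviving the relations: $1,a,b,d,e,f,g,bf$ (using $bf=ag=ed$), and check no further nonzero products exist. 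This yields
$$
P_R(t)=\frac{1+t^3+t^4+t^5+t^9+t^{10}+t^{11}+t^{14}}{(1-t^4)(1-t^{12})},
$$
the numerator recording $1,a,b,d,e,f,g,bf$ in degrees $0,3,4,5,9,10,11,14$. This matches Equation \eqref{equ_J2Ppoincareseries} exactly.

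Since $\vp$ is a degree-preserving surjection of graded $\FF_3$-vector spaces and $P_R(t)=P(t)=P_{H^*(J_2;\FF_3)}(t)$ with all coefficients finite, $\vp$ is an isomorphism in each degree, hence an isomorphism of graded algebras; this completes the proof. The main obstacle is the middle step: verifying that the listed relations are a \emph{complete} set, i.e.\ that the quotient ring really has the claimed Poincar\'e series and is not larger. Concretely I expect the delicate point to be showing $R$ is free over $\FF_3[c,h]$ on the eight stated generators — one must rule out ``hidden'' nonzero products such as $a d$, $a e$-type elements, or higher combinations, and confirm that every S-polynomial / overlap among the sixteen relations reduces to zero (a Gr\"obner-basis style confluence check). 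Once confluence is established the module basis, and hence $P_R(t)$, follow mechanically. The relation-checking in the first step, by contrast, is long but entirely mechanical given Theorems \ref{coho31+2+} and \ref{thmcohoJ2subring} and Equation \eqref{actionofC_8onring}.
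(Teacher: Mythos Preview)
Your approach is essentially the same as the paper's: a surjection from the free graded-commutative algebra modulo the sixteen relations onto $H^*(J_2;\FF_3)$, combined with a Poincar\'e-series comparison after exhibiting the quotient as an $\FF_3[c,h]$-module on the eight generators $1,a,b,d,e,f,g,ag\,(=bf=ed)$.

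Two points of comparison are worth noting. First, the paper does not simply verify the sixteen relations by brute substitution into Leary's presentation; it \emph{discovers} them via the filtration: each product $\overline{a}\,\overline{b}$, $\overline{a}\,\overline{d}$, \ldots\ lands in a bidegree where $(E_6^{n,m})^{C_8}=0$, so the corresponding product in $H^*(J_2;\FF_3)$ drops into a deeper filtration layer and can be rewritten in terms of elements already there. This explains both the form of each relation and why there are exactly sixteen of them. Your direct verification is of course valid, but gives no such explanation.

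Second, and more importantly, you make the completeness step harder than necessary. You propose to show that $H=R/I$ is \emph{free} over $\FF_3[c,h]$ on eight generators, and you correctly flag the Gr\"obner/confluence check as the delicate point. The paper sidesteps this entirely with a sandwich argument: it only shows that every monomial in $H$ can be \emph{rewritten} as $m'\,c^i h^j$ with $m'\in\{1,a,b,d,e,f,g,ga\}$ (your mod-$(c,h)$ analysis does exactly this), which gives the \emph{upper} bound $P_H(t)\le \frac{1+t^3+t^4+t^5+t^9+t^{10}+t^{11}+t^{14}}{(1-t^4)(1-t^{12})}$. The surjection $H\twoheadrightarrow H^*(J_2;\FF_3)$ already gives the matching \emph{lower} bound $P_H(t)\ge P(t)$, so equality (and hence freeness) follows automatically. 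No confluence check is needed.
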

\begin{proof}
By Theorem \ref{thmcohoJ2subring} we know that there is a surjective homomorphism from the free graded-commutative algebra $R$ on generators $a,b,c,d,e,f,g,h$ of the stated degrees to $H^*(J_2;\FF_3)$. Each generator of $R$ is mapped to the generator with the same name in $H^*(J_2;\FF_3)$. To compute the kernel of this homomorphism we start looking at relations in $E_6^{C_8}$ among the elements $\overline{a},\overline{b},\overline{c},\overline{d},\overline{e},\overline{f},\overline{g},\overline{h}$. Then we lift these relations back to $H^*(J_2;\FF_3)$. To decide whether we have considered enough relations we use Poincar\'{e} series. We find that the highest order relation occurs in total degree $21$.

Recall from the proof of Theorem \ref{thmcohoJ2subring} that 
$$
\overline{a}=w_{2,1},\overline{b}=w_{3,1}, \overline{c}=w^{-}_{3,1},
\overline{d}=w_{3,2},\overline{e}=w_{3,6},\overline{f}=w_{4,6},
\overline{g}=w_{2,9}.
$$
Now, from Table \ref{table_E6_21x21}, where we show the corner $\{{E_6^{n,m}}^{C_8}\}_{n,m=0,\ldots,21}$, we find that the following $16$ products are $0$:
$$
\overline{a}\overline{b},\overline{b}^2,\overline{a}\overline{d},\overline{b}\overline{d},\overline{a}\overline{e},\overline{b}\overline{e},\overline{a}\overline{f},\overline{a}\overline{g}, \overline{e}\overline{d},\overline{b}\overline{g},\overline{c}\overline{g},\overline{d}\overline{g},\overline{h}\overline{c}\overline{a},\overline{e}\overline{g},\overline{h}\overline{c}^2,\overline{f}\overline{g}.
$$
All these products are zero since they have bidegree $(n,m)$ with ${E_6^{n,m}}^{C_8}=0$. Now assume some product $\overline{a}^{i_a}\overline{b}^{i_b}\overline{c}^{i_c}\overline{d}^{i_d}\overline{e}^{i_e}\overline{f}^{i_f}\overline{g}^{i_g}\overline{h}^{i_h}$ is $0$ in ${E_6^{i,n-i}}^{C_8}$, which is isomorphic to $(F^iH^n)^{C_8}/(F^{i+1}H^n)^{C_8}$. Here we have $n=3i_a+4i_b+4i_c+5i_d+9i_e+10i_f+11i_g$ and $i=2i_a+3i_b+3i_c+3i_d+3i_3+4i_f+2i_g$. Then the element ${a}^{i_a}{b}^{i_b}{c}^{i_c}{d}^{i_d}{e}^{i_e}{f}^{i_f}{g}^{i_g}{h}^{i_h}$ of $(F^iH^n)^{C_8}$ lies in $(F^{i+1}H^n)^{C_8}$ too and hence it may be written using elements from the latter. This gives rise to a relation. For instance, for $\overline{a}\overline{d}=0\in {E_6^{5,3}}^{C_8}$, and using the fact that 
$$
{E_6^{6,2}}^{C_8}=0, {E_6^{7,1}}^{C_8}=\langle \overline{bc}\rangle,{E_6^{8,0}}^{C_8}=\langle \overline{c^2}\rangle,
$$
we deduce that $ad=\alpha bc+\beta c^2$ holds in $H^*(J_2;\FF_3)$ for some $\alpha$ and $\beta$ in $\FF_3$. In this case, the equation is satisfied for $\alpha=-1$ and $\beta=0$:
\begin{align*}
bc=(Yx-Y'x')(x^2+x'^2+xY'+x'Y)\\
=Yx^3+Yxx'^2+YY'x^2+Y^2xx'-x^2x'Y'-x'^3Y'-xx'Y'^2-x'^2YY'\\
=Yx^3+Yxx'^2+x^3x'+x^2x'Y'-x^2x'Y'-x'^3Y'-xx'^2Y-xx'^3\\
=Yx^3-Y'x'^3
\end{align*}
and
\begin{align*}
ad=(Yy'-xy-x'y')(Xx-X'x')\\
=xy'YX-x'y'YX'-x^2yX+xx'yX'-xx'y'X+x'^2y'X'\\
=xx'y'X-x^2y'X'-x^2yX+x^2y'X'-xx'y'X+x'^2y'X'\\
=-x^2yX+x'^2y'X'=-x^2(xY-xx')+x'^2(x'Y'-xx')\\
=-x^3Y+x^3x'+x'^3Y'-xx'^3=-Yx^3+Y'x'^3.
\end{align*}
The same analysis for the products $\overline{a}\overline{f},\overline{a}\overline{g}, \overline{e}\overline{d},\overline{c}\overline{g},\overline{h}\overline{c}\overline{a},\overline{e}\overline{g},\overline{h}\overline{c}^2,\overline{f}\overline{g}
$ produce the following relations respectively:
\begin{align*}
af=-ec=-z(x^3y+x'^3y'),\\
ag=bf=z(x^3Y-x'^3Y'-xx'^2Y+x^3x'),\\
ed=ag=z(x^3Y-x'^3Y'-xx'^2Y+x^3x'), \\
cg=-fd=-z(Xx^3-X'x'^3+X'x^2x'),\\
ach=-ef=-z^2(x^3y+x'^3y'-x^2x'y'),\\
eg=had=z^2(-x^3Y+x'^3Y'),\\
c^2h=f^2=z^2(x^4+x'^4-x^2x'^2),\\
fg=-chd=z^2(-Xx^3+X'x'^3).
\end{align*}
For the remaining products $\overline{a}\overline{b},\overline{b}^2,\overline{b}\overline{d},\overline{a}\overline{e},\overline{b}\overline{e},\overline{b}\overline{g},\overline{d}\overline{g}$ we obtain $ab=b^2=bd=ae=be=bg=dg=0$. For example,
\begin{align*}
ab=(Yy'-xy-x'y')(Yx-Y'x')\\
=Y^2xy'-YY'x'y'-Yx^2y+Y'xx'y-Yxx'y'+Y'x'^2y'\\
=Y'x^2y'-xx'^2y'-Yx^2y+Y'x^2y'-Yx'^2y+Y'x'^2y'\\
=x^3y'-x'^3y-x^3y'+x^3y'-xx'^2y'+xx'^2y'=x^3y'-x'^3y=0.
\end{align*}
Therefore, we get the following relations:
$$
ab, b^2, bc+ad,bd,ae,be,fa+ce,bf-ag,ed-ag,
$$
$$
bg,fd+cg,dg,ef+cha,eg-had,f^2-c^2h,fg+chd.
$$
Set $I$ to be the ideal of $R$ generated by these relations and set $H=R/I$. We show that $H\cong H^*(J_2;\FF_3)$. The subalgebra of $H$ generated by $c$ and $h$ is the polynomial algebra on a generator of degree $4$ and a generator of degree $12$, $\FF_3[c,h]$. Using that $H$ is graded-commutative and the $16$ relations above it is easy to check that every monomial $m={a}^{i_a}{b}^{i_b}{c}^{i_c}{d}^{i_d}{e}^{i_e}{f}^{i_f}{g}^{i_g}{h}^{i_h}$ of $H$ may be rewritten as $m=m'c^{i'_c}h^{i'_h}$ with $m'\in \{1,a,b,d,e,f,g,ga\}$. Hence,  $H$ is finite over $\FF_3[c,h]$,
\begin{equation}\label{HJ2finitegeneratorsoverch}
H=\FF_3[c,h]\{1,a,b,d,e,f,g,ga\},
\end{equation}
and its Poincar\'{e} series coincides with that of $J_2$ (\ref{equ_J2Ppoincareseries}). It follows that $H\cong H^*(J_2;\FF_3)$.

\end{proof}
\begin{remark}
As noted in the introduction, a presentation of $H^*(J_2;\FF_3)$ was already obtained by computer \cite{KING}. There the generators are denoted by 
$$
a_{3,0}, a_{4,1}, b_{4,0}, a_{5,0}, a_{9,1}, b_{10,0}, a_{11,1}, c_{12,1},
$$
with degrees given by the first coordinate of the sub-index. An isomorphism with the presentation of Theorem \ref{thmcohoJ2abstractpresentation} is given by 
$$
a \mapsto -a_{3,0}, b \mapsto a_{4,1}, c \mapsto b_{4,0}, d \mapsto a_{5,0},
e \mapsto a_{9,1}, f \mapsto b_{10,0}, g \mapsto a_{11,1}, h \mapsto -c_{12,1}.
$$
\end{remark}
\begin{landscape}
\begin{table}[p]
{\tiny
\xymatrix@=0pt{
&&&&&&&&&&&&&&&&&&&&&&\\
21& & &t^6w_{2,9} & & & & & & & & & & & & & & & & & & &\\
20& & & & & & & &t^{10}w_{7,0} & & & &t^7w_{4,6}w_{7,1} & & & &t^{10}w_{7,0}w_{8,0} & & & &t^7w_{4,6}w_{7,1}w_{8,0} & &\\
19& & & & & & & &ut^9w_{7,0} & & & &ut^6w_{4,6}w_{7,1} & & & &ut^9w_{7,0}w_{8,0} & & & &ut^6w_{4,6}w_{7,1}w_{8,0} & &\\
18& & & &t^6w_{3,6} &t^6w_{4,6} & & &t^{9}w_{7,1} &t^6w_{8,6} & & &t^6w_{4,6}w_{7,0} &t^6w_{4,6}w_{8,0} & & &t^9w_{7,1}w_{8,0} &t^6w_{8,6}w_{8,0} & & &t^6w_{4,6}w_{7,0}w_{8,0} &t^6w_{4,6}w^{2}_{8,0} &\\
17& & & & & & & & & & & & & & & & & & & & & &\\
16& & & & & & & & & & & & & & & & & & & & & &\\
15& & & & & & & & & & & & & & & & & & & & & &\\
14& & & &t^6w_{3,2} & & & &t^7w_{7,1} & & & &t^6w_{3,2}w_{8,0} & & & &t^7w_{7,1}w_{8,0} & & & &t^6w_{3,2}w^{2}_{8,0} & &\\
13& & &t^6w_{2,1} &t^6w_{3,1} & & & &ut^6w_{7,1} & & & &t^6w_{3,2}w_{8,0} & & & &ut^6w_{7,1}w_{8,0} & & & &ut^6w_{3,1}w^{2}_{8,0} & &\\
& & & & t^6w^{-}_{3,1} & & & & & & & & & & & & & & & & & &\\
12&t^6 & & & & & & &t^6w_{7,0} &t^6w_{8,0} & & &t^6w_{11,0} &t^6w_{12,0} & & & t^6w_{7,0}w_{8,0} & t^6w^2_{8,0} & & & t^6w_{11,0}w_{8,0}& t^6w_{12,0}w_{8,0} &\\
11& & & & & & & & & & & & & & & & & & & & & &\\
10& & & & & & & & & & & & & & & & & & & & & &\\
9& & &w_{2,9} & & & & & & & & & & & & & & & & & & &\\
8& & & & & & & &t^4w_{7,0} & & & &tw_{4,6}w_{7,1} & & & &t^4w_{7,0}w_{8,0} & & & &tw_{4,6}w_{7,1}w_{8,0} & &\\
7& & & & & & & &ut^3w_{7,0} & & & &uw_{4,6}w_{7,1} & & & &ut^3w_{7,0}w_{8,0} & & & &uw_{4,6}w_{7,1}w_{8,0} & &\\
6& & & &w_{3,6} &w_{4,6} & & &t^3w_{7,1} &w_{8,6} & & &w_{4,6}w_{7,0} &w_{4,6}w_{8,0} & & &t^3w_{7,1}w_{8,0} &w_{8,6}w_{8,0} & & &w_{4,6}w_{7,0}w_{8,0} &w_{4,6}w^{2}_{8,0} &\\
5& & & & & & & & & & & & & & & & & & & & & &\\
4& & & & & & & & & & & & & & & & & & & & & &\\
3& & & &  & & & & & & & & & & & & & & & & & &\\
2& & & &w_{3,2} &  & & &tw_{7,1} & & & &w_{3,2}w_{8,0} & & & &tw_{7,1}w_{8,0} & & & &w_{3,2}w^{2}_{8,0} & &\\
& && & &  & & & & & & & & & & & & & & & & &\\
1& & &w_{2,1} &w_{3,1} & & & &uw_{7,1} & & & &w_{3,1}w_{8,0} & & & &uw_{7,1}w_{8,0} & & & &w_{3,1}w^{2}_{8,0} & &\\
& & & &w^{-}_{3,1}  & & & & & & & & & & & & & & & & & &\\
0&1 & & & & & & &w_{7,0} &w_{8,0} & & &w_{11,0} &w_{12,0} & & &w_{7,0}w_{8,0} &w^{2}_{8,0} & & &w_{11,0}w_{8,0} &w_{12,0}w_{8,0} &\\
&0 &  & 2 & 3 & 4 & & &7 &8 & & &11 &12 & & &15 &16 && &19 &20 & 
\ar@{-}"1,2"+<-7pt,-2pt>;"26,2"+<-7pt,-5pt>;
\ar@{-}"1,2"+<7pt,-2pt>;"26,2"+<7pt,-5pt>;
\ar@{-}"1,3"+<5pt,-2pt>;"26,3"+<5pt,-5pt>;
\ar@{-}"1,4"+<15pt,-2pt>;"26,4"+<15pt,-5pt>;
\ar@{-}"1,5"+<16pt,-2pt>;"26,5"+<16pt,-5pt>;
\ar@{-}"1,6"+<14pt,-2pt>;"26,6"+<14pt,-5pt>;
\ar@{-}"1,7"+<4pt,-2pt>;"26,7"+<4pt,-5pt>;
\ar@{-}"1,8"+<5pt,-2pt>;"26,8"+<5pt,-5pt>;
\ar@{-}"1,9"+<17pt,-2pt>;"26,9"+<17pt,-5pt>;
\ar@{-}"1,10"+<14pt,-2pt>;"26,10"+<14pt,-5pt>;
\ar@{-}"1,11"+<4pt,-2pt>;"26,11"+<4pt,-5pt>;
\ar@{-}"1,12"+<5pt,-2pt>;"26,12"+<5pt,-5pt>;
\ar@{-}"1,13"+<22pt,-2pt>;"26,13"+<22pt,-5pt>;
\ar@{-}"1,14"+<21pt,-2pt>;"26,14"+<21pt,-5pt>;
\ar@{-}"1,15"+<4pt,-2pt>;"26,15"+<4pt,-5pt>;
\ar@{-}"1,16"+<5pt,-2pt>;"26,16"+<5pt,-5pt>;
\ar@{-}"1,17"+<25pt,-2pt>;"26,17"+<25pt,-5pt>;
\ar@{-}"1,18"+<21pt,-2pt>;"26,18"+<21pt,-5pt>;
\ar@{-}"1,19"+<4pt,-2pt>;"26,19"+<4pt,-5pt>;
\ar@{-}"1,20"+<5pt,-2pt>;"26,20"+<5pt,-5pt>;
\ar@{-}"1,21"+<30pt,-2pt>;"26,21"+<30pt,-5pt>;
\ar@{-}"1,22"+<23pt,-2pt>;"26,22"+<23pt,-5pt>;
\ar@{-}"1,23"+<5pt,-2pt>;"26,23"+<5pt,-5pt>;
\ar@{-}"1,2"+<-7pt,-2pt>;"1,23"+<5pt,-2pt>;
\ar@{-}"2,2"+<-7pt,-5pt>;"2,23"+<5pt,-5pt>;
\ar@{-}"3,2"+<-7pt,-5pt>;"3,23"+<5pt,-5pt>;
\ar@{-}"4,2"+<-7pt,-5pt>;"4,23"+<5pt,-5pt>;
\ar@{-}"5,2"+<-7pt,-5pt>;"5,23"+<5pt,-5pt>;
\ar@{-}"6,2"+<-7pt,-5pt>;"6,23"+<5pt,-5pt>;
\ar@{-}"7,2"+<-7pt,-5pt>;"7,23"+<5pt,-5pt>;
\ar@{-}"8,2"+<-7pt,-5pt>;"8,23"+<5pt,-5pt>;
\ar@{-}"9,2"+<-7pt,-5pt>;"9,23"+<5pt,-5pt>;
\ar@{-}"11,2"+<-7pt,-5pt>;"11,23"+<5pt,-5pt>;
\ar@{-}"12,2"+<-7pt,-5pt>;"12,23"+<5pt,-5pt>;
\ar@{-}"13,2"+<-7pt,-5pt>;"13,23"+<5pt,-5pt>;
\ar@{-}"14,2"+<-7pt,-5pt>;"14,23"+<5pt,-5pt>;
\ar@{-}"15,2"+<-7pt,-5pt>;"15,23"+<5pt,-5pt>;
\ar@{-}"16,2"+<-7pt,-5pt>;"16,23"+<5pt,-5pt>;
\ar@{-}"17,2"+<-7pt,-5pt>;"17,23"+<5pt,-5pt>;
\ar@{-}"18,2"+<-7pt,-5pt>;"18,23"+<5pt,-5pt>;
\ar@{-}"19,2"+<-7pt,-5pt>;"19,23"+<5pt,-5pt>;
\ar@{-}"20,2"+<-7pt,-5pt>;"20,23"+<5pt,-5pt>;
\ar@{-}"21,2"+<-7pt,-5pt>;"21,23"+<5pt,-5pt>;
\ar@{-}"23,2"+<-7pt,-5pt>;"23,23"+<5pt,-5pt>;
\ar@{-}"25,2"+<-7pt,-5pt>;"25,23"+<5pt,-5pt>;
\ar@{-}"26,2"+<-7pt,-5pt>;"26,23"+<5pt,-5pt>;
}
}
\caption{Linear generators in  $\{{E_6^{n,m}}^{C_8}\}_{n,m=0,\ldots,21}$ }
\label{table_E6_21x21}
\end{table}
\end{landscape}

\end{document}